\documentclass[11pt]{article}
\usepackage{a4wide}
\usepackage{graphicx}
\usepackage{color}
\usepackage{amsmath}
\usepackage[mathcal]{eucal}
\usepackage{amsthm}
\usepackage{amssymb}
\usepackage{caption}

\addtolength{\textheight}{-5mm}

\newtheorem{theo}{Theorem}[section]
\newtheorem{mydef}[theo]{Definition}
\newtheorem{prop}[theo]{Proposition}
\newtheorem{lem}[theo]{Lemma}
\newtheorem{cor}[theo]{Corollary}
\newtheorem{prob}[theo]{Problem}
\newtheorem*{mtheorem}{Theorem}

\newcommand{\qitem}[1]{\noindent\leavevmode\hangindent1.5\parindent%
  \noindent\hbox to1.5\parindent{#1\hss}\ignorespaces}

\newcommand\blfootnote[1]{%
  \begingroup
  \renewcommand\thefootnote{}\footnote{#1}%
  \addtocounter{footnote}{-1}%
  \endgroup
}
\newcommand{\col}{\mathrm{col}}
\newcommand{\wcol}{\mathrm{wcol}}
\newcommand{\dcol}{\mathrm{dcol}}

\newcommand{\edp}[2]{#1^{[\natural #2]}}
\newcommand{\epp}[2]{#1^{\natural #2}}

\usepackage{soul}

\title{Chromatic Numbers of Exact Distance Graphs\,\thanks{\,The research
    for this paper was started during a stay of the first two authors at
    the Mittag-Leffler Institute in Stockholm. JvdH and HAK would like to
    thank the Mittag-Leffler Institute for hospitality and support. DAQ
    thankfully acknowledges support from CONICYT, PIA/Concurso Apoyo a
    Centros Cient\'ificos y Tecnol\'ogicos de Excelencia con Financiamiento
    Basal AFB170001.}}

\author{Jan van den Heuvel,\thanks{\,Department of Mathematics, London
    School of Economics and Political Science, London WC2A 2AE, UK.} \ \
  H.A. Kierstead\,\thanks{\,School of Mathematical and Statistical
    Sciences, Arizona State University, Tempe, AZ 85287, USA.} \ \ and \
  Daniel A. Quiroz\,\footnotemark[2] \thanks{\,Current affiliation:
    Centro de Modelamiento Matem\'atico, Universidad de Chile, Santiago,
    Chile.}}

\date{}

\begin{document}
\maketitle

\blfootnote{\,Email: \texttt{j.van-den-heuvel@lse.ac.uk},
  \texttt{kierstead@asu.edu}, \texttt{dquiroz@dim.uchile.cl}.}

\begin{abstract}
  \noindent
  For any graph $G=(V,E)$ and positive integer~$p$, the \emph{exact
    distance-$p$ graph}~$\edp{G}{p}$ is the graph with vertex set~$V$,
  which has an edge between vertices~$x$ and~$y$ if and only if~$x$ and~$y$
  have distance~$p$ in~$G$. For odd~$p$, Ne\v{s}et\v{r}il and Ossona de
  Mendez proved that for any fixed graph class with \emph{bounded
    expansion}, the chromatic number of~$\edp{G}{p}$ is bounded by an
  absolute constant.

  Using the notion of \emph{generalised colouring numbers}, we give a much
  simpler proof for the result of Ne\v{s}et\v{r}il and Ossona de Mendez,
  which at the same time gives significantly better bounds. In particular,
  we show that for any graph~$G$ and odd positive integer~$p$, the
  chromatic number of~$\edp{G}{p}$ is bounded by the weak
  $(2p-1)$-colouring number of~$G$. For even~$p$, we prove that
  $\chi(\edp{G}{p})$ is at most the weak $(2p)$-colouring number times the
  maximum degree.

  For odd~$p$, the existing lower bound on the number of colours needed to
  colour $\edp{G}{p}$ when~$G$ is planar is improved. Similar lower bounds
  are given for $K_t$-minor free graphs.

  \bigskip\noindent
  Key Words: \emph{bounded expansion, chromatic number, exact distance
    graphs, generalised colouring numbers, planar graphs}
\end{abstract}

\section{Introduction and Main Results}\label{sec1}

\subsection{Powers, exact powers, and exact distance graphs}

All graphs in this paper are assumed to be finite, undirected, simple and
without loops. For a graph $G=(V(G),E(G))$ (or just $(V,E)$ if the graph
under consideration is clear) and vertices $x,y\in V$, let $d_G(x,y)$
denote the distance between~$x$ and~$y$ in~$G$, that is, the number of
edges contained in a shortest path between~$x$ and~$y$.

For a positive integer~$p$, the \emph{$p$-th power graph $G^p=(V,E^p)$
  of~$G$} is the graph with~$V$ as its vertex set and~$E^p$ contains the
edge~$xy$ if and only if $d_G(x,y)\le p$. Problems related to the chromatic
number $\chi(G^p)$ of power graphs~$G^p$ were first considered by Kramer
and Kramer~\cite{KK69a,KK69b} in 1969 and have enjoyed significant
attention ever since. It is clear that for $p\ge2$ any power of a star is a
clique, and hence there are not many classes of graphs for which
$\chi(G^p)$ can be bounded by a constant. An easy argument shows that for a
graph~$G$ with maximum degree $\Delta(G)\ge3$ we have
\[\chi(G^p)\le 1+\Delta(G^p)\le
1+\Delta(G)\cdot\sum_{i=0}^{p-1}(\Delta(G)-1)^i\in
\mathcal{O}(\Delta(G)^p).\]
However, there are many classes of graphs for which it is possible to find
much better upper bounds. Recall that a graph~$G$ is \emph{$k$-degenerate}
if every subgraph of~$G$ contains a vertex of degree at most~$k$.

\begin{theo}[Agnarsson \&
  Halld\'orsson~\cite{AH03}]\label{largepow}\mbox{}\\*
  Let $k$ and $p$ be positive integers. There exists a constant $c=c(k,p)$
  such that for every $k$-degenerate graph $G$ we have
  $\chi(G^p)\le c\cdot\Delta(G)^{\lfloor p/2\rfloor}$.
\end{theo}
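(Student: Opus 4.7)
The plan is to bound the degeneracy of $G^p$ and then apply the greedy colouring inequality $\chi(H)\le\mathrm{degeneracy}(H)+1$. I would fix a degeneracy ordering $L$ of $G$ so that every vertex has at most $k$ $G$-neighbours preceding it in~$L$, and orient each edge of $G$ from its later endpoint to its earlier endpoint, so that every vertex has out-degree at most~$k$ and in-degree at most~$\Delta(G)$. For each vertex~$v$ the goal is to bound the set $N^-_{G^p}(v)=\{u<_L v:d_G(u,v)\le p\}$ by $c(k,p)\cdot\Delta(G)^{\lfloor p/2\rfloor}$.

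For each $u\in N^-_{G^p}(v)$ I would fix a shortest $v$-$u$ path $P_u$ in~$G$ and let $w_u$ be the vertex of smallest $L$-index on~$P_u$; since $u<_L v$, we have $w_u\le_L u<_L v$. Setting $q=\lfloor p/2\rfloor$, the equality $d_G(v,w_u)+d_G(w_u,u)=d_G(v,u)\le p$ forces $\min(d_G(v,w_u),d_G(w_u,u))\le q$. The two sub-paths $v\leadsto w_u$ and $w_u\leadsto u$ both lie in $\{z:z\ge_L w_u\}$ with $w_u$ as their unique minimum, so each edge of $P_u$ is either a back-arc (at most~$k$ choices) or a forward-arc (at most $\Delta(G)$ choices) in the degeneracy orientation. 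Parametrising the paths by their sequences of arcs and granting that only at most $q$ forward-arcs are ever needed, one obtains
\[ |N^-_{G^p}(v)|\;\le\;\sum_{\ell=1}^{p}\sum_{f=0}^{q}\binom{\ell}{f}\,\Delta(G)^{f}\,k^{\ell-f}\;\le\;c(k,p)\cdot\Delta(G)^{\lfloor p/2\rfloor}, \]
from which greedy colouring yields $\chi(G^p)\le c(k,p)\cdot\Delta(G)^{\lfloor p/2\rfloor}+1$.

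The main obstacle is justifying the $\Delta(G)^{q}$ factor rather than $\Delta(G)^{p}$. For an arbitrarily chosen shortest path the number of forward-arcs need not be bounded by~$q$: small explicit examples (e.g.\ paths of length~$3$ with two forward-arcs ending at a smaller-index vertex) already exhibit this. The resolution is to choose $P_u$ and the pivot $w_u$ in a coordinated way, either by a local swapping argument that prefers back-arcs when several shortest paths are available, or by strengthening~$L$ via a BFS layering combined with the degeneracy ordering inside each BFS layer, which automatically limits how much a shortest path can ``go up'' in~$L$. In either case the technical core is to show that across the whole of $P_u$ the $q$ forward-arcs and the (up to) $p-q$ back-arcs combine to give exactly the exponent $\lfloor p/2\rfloor$ on~$\Delta(G)$, matching the claimed bound.
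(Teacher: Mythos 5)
The paper does not actually prove this statement---it is quoted from Agnarsson and Halld\'orsson~\cite{AH03} as background---so your argument has to be judged on its own terms. The strategy of fixing a degeneracy ordering $L$ of $G$ and bounding the back-degree $|N^-_{G^p}(v)|=|\{u<_Lv : d_G(u,v)\le p\}|$ cannot work, and the obstacle is more severe than the one you flag. You correctly observe that an arbitrary shortest path may use more than $\lfloor p/2\rfloor$ forward-arcs, but you propose to repair this by a more coordinated choice of path or pivot. No such choice can succeed, because the set you are trying to bound can itself have size of order $\Delta^{p-1}$ for a perfectly legitimate degeneracy ordering. Concretely, let $G$ be the tree consisting of a root $r$, a complete $(\Delta-1)$-ary subtree of depth $p-1$ rooted at $r$, and one extra leaf $u$ attached to $r$; let $L$ list $r$ first, then the subtree in breadth-first order, then $u$ last. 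Every vertex has at most one earlier neighbour, so $L$ is a valid $1$-degeneracy ordering, yet every vertex of the subtree precedes $u$ and lies at distance at most $p$ from it, so $|N^-_{G^p}(u)|\ge(\Delta-1)^{p-1}$. Already for $p=3$ and $k=1$ this exceeds your claimed bound $\sum_{\ell=1}^{p}\sum_{f=0}^{q}\binom{\ell}{f}\Delta^f k^{\ell-f}=3+6\Delta$. Since the set itself is too large, no encoding of its elements by walks with few forward-arcs can possibly bound it by $c(k,p)\cdot\Delta^{\lfloor p/2\rfloor}$; the claimed inequality is simply false for this ordering.

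The real content of the theorem is therefore the construction of a suitable ordering (equivalently, exhibiting in every induced subgraph of $G^p$ a vertex of degree $O(\Delta^{\lfloor p/2\rfloor})$), and that is exactly what is absent from your proposal. Your aside about combining a BFS layering with the degeneracy ordering does point in a sensible direction: for a rooted tree, ordering by non-decreasing depth gives back-degree $O(p\cdot\Delta^{\lfloor p/2\rfloor})$ in $G^p$, because for $u<_Lv$ the least common ancestor $m$ of $u$ and $v$ is one of at most $p$ ancestors of $v$ and $u$ must lie within distance $\lfloor p/2\rfloor$ of $m$. But a general $k$-degenerate graph has no canonical root or layering, and making a layered order interact correctly with the bound of $k$ on back-neighbours is the technical heart of the Agnarsson--Halld\'orsson argument. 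As written, your proof establishes the theorem only modulo the assumption you yourself grant (``only at most $q$ forward-arcs are ever needed''), and that assumption is false.
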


\noindent
In this result, the exponent on $\Delta(G)$ is best possible (see below).
In particular, $\chi(G^2)$ is at most linear in $\Delta(G)$ for planar
graphs~$G$. Wegner~\cite{W77} conjectured that every planar graph~$G$ with
$\Delta(G)\ge 8$ satisfies
$\chi(G^2)\le \bigl\lfloor\frac32\Delta(G)\bigr\rfloor+1$, and gave
examples that show this bound would be tight. The conjecture has attracted
considerable attention since it was stated in~1977. For more on this
conjecture we refer the reader to \cite{AEvdH13,KK08}.

In \cite[Section~11.9]{NOdM12}, Ne\v{s}et\v{r}il and Ossona de Mendez
define the notion of \emph{exact power graph}. Let $G=(V,E)$ be a graph
and~$p$ a positive integer. The \emph{exact $p$-power graph~$\epp{G}{p}$}
has~$V$ as its vertex set, and~$xy$ is an edge in~$\epp{G}{p}$ if and only
if there is in~$G$ a path of length~$p$ (i.e.\ with~$p$ edges) between the
vertices~$x$ and~$y$ (the path need not be a shortest path). Similarly,
they define the \emph{exact distance-$p$ graph $\edp{G}{p}$} as the graph
with~$V$ as its vertex set, and~$xy$ as an edge if and only if
$d_G(x,y)=p$. Since obviously
$E(\edp{G}{p})\subseteq E(\epp{G}{p})\subseteq E(G^p)$, we have
$\chi(\edp{G}{p})\le\chi(\epp{G}{p})\le\chi(G^p)$.

For planar graphs~$G$, Theorem~\ref{largepow} gives that the exact
$p$-power graphs $\epp{G}{p}$ satisfy
$\chi(\epp{G}{p})\in\mathcal{O}\bigl(\Delta(G)^{\lfloor p/2\rfloor}\bigr)$.
This result is best possible, even for outerplanar graphs, as the following
examples show. For $k\ge2$ and $p\ge4$, let $T_{k,\lfloor p/2\rfloor}$ be
the $k$-regular tree of radius $\bigl\lfloor\frac12p\bigr\rfloor$ with
root~$v$. We say that a vertex $z$ is at level $\ell$ if $d(v,z)=\ell$. For
every edge $xy$ between vertices at levels~$\ell$ and $\ell+1$ for some
$\ell\ge1$, we do the following: if~$p$ is even, then add a path of length
$\ell+1$ between~$x$ and~$y$; if $p$ is odd, then add paths of length
$\ell+1$ and $\ell+2$ between~$x$ and~$y$. Call the resulting
graph~$G_{k,p}$. It is straightforward to check that $\Delta(G_{k,p})\le2k$
for even $p$, that $\Delta(G_{k,p})\le3k$ for odd $p$, and that there is a
path of length~$p$ between any two vertices at level
$\bigl\lfloor\frac12p\bigr\rfloor$. Since there are $k(k-1)^{\lfloor
  p/2\rfloor-1}$ vertices at level $\bigl\lfloor\frac12p\bigr\rfloor$, this
immediately means that $\chi(\epp{G_{k,p}}{p})\ge k(k-1)^{\lfloor
  p/2\rfloor-1}\in \Omega(\Delta(G_{k,p})^{\lfloor p/2\rfloor})$.

Surprisingly, for exact distance graphs, the situation is quite different.
In that case we can prove that for planar graphs~$G$ and odd~$p$ we have
$\chi(\edp{G}{p})\in\mathcal{O}(1)$, while for even~$p$ we have
$\chi(\edp{G}{p})\in\mathcal{O}\bigl(\Delta(G)\bigr)$. These bounds are
actually special cases of the following more general results. We will
recall the concept of a \emph{graph class with bounded expansion} in the
next subsection.

\begin{theo}\label{thm3}\mbox{}\\*
  Let $\mathcal{K}$ be a class of graphs with bounded expansion.

  \smallskip
  \qitem{(a)}Let~$p$ be an odd positive integer. Then there exists a
  constant $C=C(\mathcal{K},p)$ such that for every graph $G\in\mathcal{K}$
  we have $\chi(\edp{G}{p})\le C$.

  \smallskip
  \qitem{(b)}Let~$p$ be an even positive integer. Then there exists a
  constant $C'=C'(\mathcal{K},p)$ such that for every graph
  $G\in\mathcal{K}$ we have $\chi(\edp{G}{p})\le C'\cdot\Delta(G)$.
\end{theo}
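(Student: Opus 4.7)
The plan is to derive Theorem~\ref{thm3} from two more precise inequalities phrased in terms of the \emph{weak $r$-colouring number} $\wcol_r(G)$, namely $\chi(\edp{G}{p}) \le \wcol_{2p-1}(G)$ for odd $p$ and $\chi(\edp{G}{p}) \le \wcol_{2p}(G) \cdot \Delta(G)$ for even $p$. Combined with the standard fact that, for any class $\mathcal{K}$ of bounded expansion and any fixed $r$, the quantity $\wcol_r(G)$ admits a uniform bound over $G \in \mathcal{K}$ depending only on $\mathcal{K}$ and $r$, both parts of Theorem~\ref{thm3} follow at once.

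For the odd case $p = 2q+1$, fix an ordering $L$ of $V(G)$ with $\wcol_{2p-1}(G) = k$, and for each vertex $v$ let $f(v)$ be the $L$-smallest vertex in the ball $B_q(v) = \{x : d_G(x,v) \le q\}$. Any shortest path from $v$ to $f(v)$ stays inside $B_q(v)$, so all its vertices are $\ge_L f(v)$; hence $f(v)$ is weakly $q$-reachable from $v$ with respect to $L$, and in particular weakly $(2p-1)$-reachable. Produce an auxiliary $k$-colouring $g$ of $V(G)$ by the usual greedy along $L$: process vertices smallest-first and assign to each $w$ the least colour not yet used on the previously coloured members of $\mathrm{WReach}_{2p-1}[w]$; this uses at most $k$ colours and ensures $g(w) \ne g(w')$ whenever one of $w,w'$ is weakly $(2p-1)$-reachable from the other. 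Set $c(v) := g(f(v))$ and verify propriety: if $d_G(u,v) = p$, then the balls $B_q(u)$ and $B_q(v)$ are disjoint (since $p > 2q$), so $f(u) \ne f(v)$; assume $f(u) <_L f(v)$. The concatenated walk $f(v) \leadsto v \leadsto u \leadsto f(u)$ has length at most $q + p + q = 2p-1$, and its vertices all lie in $B_q(u) \cup B_q(v)$, because every vertex of a shortest $u$--$v$ path of length $p$ is at distance at most $q$ from one of the endpoints. By $L$-minimality of $f(u)$ in $B_q(u)$ and of $f(v)$ (which is $\ge_L f(u)$) in $B_q(v)$, every such vertex is $\ge_L f(u)$, so $f(u) \in \mathrm{WReach}_{2p-1}[f(v)]$, giving $c(u) \ne c(v)$.

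For even $p = 2q$, run the analogous scheme with $r = 2p$ throughout; the walk has length at most $q + p + q = 2p$, and the same in-the-union-of-balls argument shows $f(u) \in \mathrm{WReach}_{2p}[f(v)]$ whenever $f(u) \ne f(v)$. The new wrinkle is that $B_q(u)$ and $B_q(v)$ can now intersect, so the collision $f(u) = f(v) = w$ may occur. In that case the triangle inequality is tight, $d_G(u,w) = d_G(v,w) = q$, and on any fixed shortest $w$--$u$ and $w$--$v$ paths the initial neighbours of $w$ must differ, for otherwise splicing gives a $u$--$v$ walk of length at most $2q - 2 < p$. Pick for each vertex $w$ an arbitrary injection $\lambda_w : N(w) \to \{1, \ldots, \Delta(G)\}$, let $h(v)$ be $\lambda_{f(v)}$ applied to the chosen initial neighbour of $f(v)$, and set $c(v) := (g(f(v)), h(v))$; this proper colouring of $\edp{G}{p}$ uses at most $\wcol_{2p}(G) \cdot \Delta(G)$ colours.

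The main obstacle is the geometric lemma that every vertex on a shortest $u$--$v$ path of length $p$ lies in $B_{\lfloor p/2 \rfloor}(u) \cup B_{\lfloor p/2 \rfloor}(v)$, which, combined with $L$-minimality of $f(u)$ and $f(v)$ in their balls, is precisely what confines the long concatenated walk to the weak reachable set of $\max\{f(u), f(v)\}$. The $\Delta(G)$ factor in the even case is an unavoidable consequence of the two radius-$p/2$ balls overlapping when $d_G(u,v) = p$; the midpoint collision is then resolved cheaply by the auxiliary neighbour-label coordinate $h$.
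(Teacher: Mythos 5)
Your proposal is correct and follows essentially the same route as the paper: you establish the intermediate bounds $\chi(\edp{G}{p})\le\wcol_{2p-1}(G)$ for odd $p$ and $\chi(\edp{G}{p})\le\wcol_{2p}(G)\cdot\Delta(G)$ for even $p$ (the paper's Theorem~\ref{thm10}) by colouring each vertex with the greedy colour of the $L$-minimum of its radius-$\bigl\lfloor p/2\bigr\rfloor$ ball, confining the concatenated walk to the union of the two balls, resolving the even-case collision with an injective neighbour label, and then invoking Zhu's characterisation of bounded expansion. The only difference is that the paper runs the identical argument with the slightly sharper parameter $\dcol_{2p-1}$ in place of $\wcol_{2p-1}$, which matters only for the explicit constants in Section~\ref{sec3}, not for Theorem~\ref{thm3}.
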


\noindent
We will give two proofs of part\,(a). The two proofs give incomparable
bounds. Also, both proofs are considerably shorter and provide better
bounds than the original proof of part\,(a) of Ne\v{s}et\v{r}il and Ossona
de Mendez \cite[Theorem~11.8]{NOdM12}. Theorem~\ref{thm3}\,(b) is new, as
far as we are aware.

As we showed above, if we consider exact powers instead of exact distance
graphs, then we need to use bounds involving~$\Delta(G)$ if we want to
bound $\chi(\epp{G}{p})$, even for odd~$p$ and if $G$ is planar. However,
by adding the condition that~$G$ has sufficiently large \emph{odd girth}
(length of a shortest odd cycle), $\chi(\epp{G}{p})$ can be bounded without
reference to $\Delta(G)$, for odd $p$. It follows from
Theorem~\ref{thm3}\,(a) that this is possible if the odd girth is at least
$2p+1$. This is because odd girth at least $2p+1$ guarantees that if there
is a path of length~$p$ between~$u$ and~$v$, then any shortest $uv$-path
has odd length. With some more care we can reprove the following.

\begin{theo}[Ne\v{s}et\v{r}il \& Ossona de Mendez
  {\cite[Theorem~11.7]{NOdM12}}] \label{thm4} \mbox{}\\*
  Let $\mathcal{K}$ be a class of graphs with bounded expansion and let~$p$
  be an odd positive integer. Then there exists a constant
  $M=M(\mathcal{K},p)$ such that for every graph $G\in\mathcal{K}$ with odd
  girth at least $p+1$ we have $\chi(\epp{G}{p})\le M$.
\end{theo}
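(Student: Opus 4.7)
My approach is to decompose the edges of $\epp{G}{p}$ by the actual graph distance between endpoints, and reduce each part either to Theorem~\ref{thm3}(a) or to a structural colouring argument. For $1 \le d \le p$, set
\[
H_d = \{uv \in E(\epp{G}{p}) : d_G(u,v) = d\}.
\]
Since $uv \in \epp{G}{p}$ already forces $d_G(u,v) \le p$, we have $E(\epp{G}{p}) = H_1 \cup \cdots \cup H_p$, and hence $\chi(\epp{G}{p}) \le \prod_{d=1}^{p} \chi(H_d)$. For each odd $d$, the subgraph $H_d$ is contained in $\edp{G}{d}$, so Theorem~\ref{thm3}(a) bounds $\chi(H_d)$ by a constant depending only on $\mathcal{K}$ and $d$. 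This already recovers the theorem under the stronger hypothesis of odd girth $\ge 2p+1$ that is mentioned just before its statement, because in that range a short-closed-odd-walk parity argument rules out even values of $d$ entirely.

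The extra care is needed exactly when $d$ is even, which under our weaker hypothesis may occur (and forces $d \ge 2$). Given $uv \in H_d$ with $d$ even, pick any length-$p$ path $P$ and any shortest $u$-$v$ path $Q$. The closed walk $P \cdot Q^{-1}$ has odd length $p + d \le 2p - 1$ and hence contains an odd cycle, whose length under the odd-girth assumption must lie in $[p+2,\,2p-1]$; so each edge of $H_d$ certifies a short odd cycle lying inside $P \cup Q$. To colour $H_d$ I would mimic the Theorem~\ref{thm3}(a) colouring: fix a weak $p$-colouring order $L$ of $G$, and colour each vertex $v$ by the pair consisting of its weakly $p$-reachable set together with the restriction of $d_G(\cdot, v)$ to this set. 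The minimum-$L$ vertex $w$ on the witnessing odd cycle belongs to both weak $p$-reachability sets (via a subpath of $P$ or $Q$ of length at most $p$), and if $u$ and $v$ receive the same colour then $d_G(w,u) = d_G(w,v)$; but since $w$ lies on a short odd cycle containing routes to $u$ and $v$ of differing parities, this yields the required contradiction. Taking the product of these colourings with the $H_d$ colourings for odd $d$ then gives a bound of the desired form $\chi(\epp{G}{p}) \le M(\mathcal{K}, p)$.

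The main obstacle will be the rigorous parity-and-distance accounting for even $d$. Unlike in Theorem~\ref{thm3}(a), where the equality $d_G(u,v) = p$ for odd $p$ cleanly forces $d_G(u,w) \neq d_G(v,w)$ on any shortest $u$-$v$ path through its midpoint $w$, here we have to exploit the distance equalities imposed by the colouring together with the parity constraint coming from the witnessing short odd cycle. I expect this will require (i) choosing $P$ and $Q$ to be internally disjoint so that the odd cycle is precisely the cycle $P \cup Q$ of length $p + d$, and (ii) possibly enriching the colouring by also recording parities of distances modulo $2$ alongside the distances themselves. Once those technicalities are in place the argument should close up cleanly, and $M(\mathcal{K}, p)$ can be taken to be a function of $\wcol_{p}(G)$ and the constants from Theorem~\ref{thm3}(a) for odd $d \le p$.
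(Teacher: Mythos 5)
Your reduction of the odd-distance edges to Theorem~\ref{thm3}\,(a) is fine (the paper in fact gets all of $\edp{G}{1}\cup\edp{G}{3}\cup\cdots\cup\edp{G}{p}$ in one go from the proof of Theorem~\ref{thm11}), and you have correctly isolated the crux: edges $uv$ with $d_G(u,v)$ even. But the argument you sketch for that case has gaps that are not mere technicalities. First, the palette: colouring $v$ by ``its weakly $p$-reachable set together with the restriction of $d_G(\cdot,v)$ to this set'' uses an unbounded number of colours, since the set $Q_{L,p}(v)$ is a set of actual vertices; to compress this into a bounded label you need the paper's device of a preliminary greedy colouring $a$ with $q=\wcol_p(G)$ colours that is injective on each set $Q_{L,\lfloor p/2\rfloor}(y)\cup\{y\}$ (Lemma~\ref{twoweak}), so that one can record a function from $[q]$ to distances rather than from vertices to distances. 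Second, your step (i) cannot in general be arranged: a length-$p$ path $P$ and a shortest path $Q$ may be forced to share internal vertices, and then the odd cycle extracted from the closed walk $P\cdot Q^{-1}$ need not pass through $u$ or $v$ at all; its $L$-minimum vertex $w$ is then only minimal on the cycle, not on the subpaths of $P$ or $Q$ joining it to $u$ and $v$, so the claim that $w$ is weakly $p$-accessible from both endpoints collapses. Third, even granting an odd cycle $P\cup Q$ through $u$ and $v$, the parity contradiction fails when the minimum vertex $w$ lies on $Q$: there $d_G(w,u)=j$ and $d_G(w,v)=d-j$ exactly, and if $w$ is the midpoint of $Q$ these are equal, so a colour clash yields no contradiction. (When $w$ lies on $P$ the contradiction does go through, but only after a careful two-step parity argument bounding the resulting closed odd walks by $p$, which you have not supplied.)

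The paper's proof (Theorem~\ref{thm11}\,(b)) avoids all three problems by never invoking the length-$p$ path and the shortest path simultaneously: it works with a single shortest \emph{odd-length} $u,v$-path $P$ of length $k\le p$ (which exists by Lemma~\ref{lem2}\,(b)), takes $z_\ell$ to be the $L$-minimum vertex of $P$, and records, for each vertex $x\in Q_{L,\lfloor p/2\rfloor}(y)$, the minimum $j$ such that $x$ is weakly $j$-accessible from $y$ (not the graph distance, and only out to radius $\lfloor p/2\rfloor$, which is exactly what keeps every closed odd walk produced in the argument of length at most $p$, so that the odd-girth hypothesis bites). The combination of the minimality of $k$ among odd path lengths and Lemma~\ref{lem2} then forces $b_u(a(z_\ell))=\ell\ne b_v(a(z_\ell))$. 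If you want to rescue your decomposition into the classes $H_d$, you would still need essentially this machinery for the even $d$; at that point the decomposition buys nothing, and the resulting bound (a product over $d$) is weaker than the paper's single bound $\bigl(\bigl\lfloor\frac12p\bigr\rfloor+2\bigr)^{\wcol_p(G)}$.
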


\noindent
Theorem~\ref{thm3}\,(a) is quite surprising, since already for planar
graphs~$G$, the exact distance graphs $\edp{G}{p}$ can be very dense. To
see this, for $i\ge2$ let $L_i$ be obtained from the complete graph~$K_4$
by subdividing each edge $i-1$ times (i.e.\ by replacing each edge by a
path of length~$i$). For $k\ge1$, form $L_{i,k}$ by adding four sets of~$k$
new vertices to~$L_i$ and joining all $k$ vertices in the same set to one
of the vertices of degree three in~$L_i$. See Figure~\ref{four} for a
sketch of $L_{1,k}$.

\begin{figure}[h]
 \centering
 \medskip
 \captionsetup{justification=centering}
 \includegraphics[width=53mm]{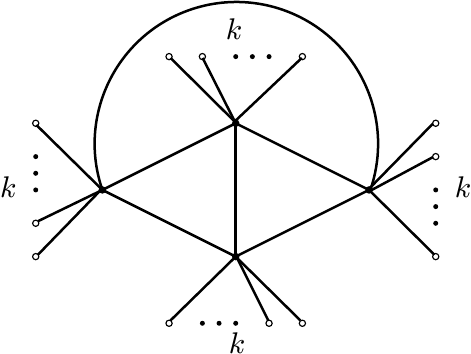}
 \smallskip
 \caption{ \ A graph $L_{1,k}$ such that $\edp{L_{1,k}}{3}$ has edge
   density approximately 3/4.}
  \label{four}
\end{figure}

It is easy to check that $L_{i,k}$ is a planar graph with $4+6(i-1)+4k$
vertices, while $\edp{L_{i,k}}{(i+2)}$ has $6k^2$ edges. So for fixed~$i$
and large~$k$, the graph $\edp{L_{i,k}}{(i+2)}$ has approximately $3/4$
times the number of edges of the complete graph on the same number of
vertices. Apart from having unbounded density, the graphs
$\edp{L_{i,k}}{(i+2)}$ have unbounded colouring number (and even unbounded
\emph{list chromatic number}), since $\edp{L_{i,k}}{(i+2)}$ contains a
complete bipartite graph $K_{k,k}$ as an (induced) subgraph. This makes the
fact that these graphs have bounded chromatic number even more surprising.

It is interesting to see what actual upper and lower bounds we can get for
the chromatic numbers of $\edp{G}{p}$ for~$G$ from some specific classes of
graphs and for specific values of (odd)~$p$. Using the proof
in~\cite{NOdM12}, it follows that for $p=3$ and for planar graphs~$G$ we
can get the upper bound $\chi(\edp{G}{3})\le5\cdot2^{20,971,522}$ (see also
Subsection~\ref{ssec3.0}). On the other hand, \cite[Exercise~11.4]{NOdM12}
gives an example of a planar graph~$G$ with $\chi(\edp{G}{3})=6$.

Our new proof of Theorem~\ref{thm3}\,(a) already gives a much smaller upper
bound for $\chi(\edp{G}{3})$ for planar graph~$G$. By a more careful
analysis, we can reduce that upper bound even further, giving the bound in
the following result. We also managed to increase the lower bound, although
by one only. Details can be found in Section~\ref{sec4}.

\begin{theo}\label{thm5}\mbox{}

  \qitem{(a)}For every planar graph~$G$ we have $\chi(\edp{G}{3})\le105$.

  \smallskip
  \qitem{(b)}There exists a planar graph~$G_5$ such that
  $\chi(\edp{G_5}{3})=7$.
\end{theo}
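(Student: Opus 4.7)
\medskip
\noindent For part\,(a), my plan is to refine the general bound $\chi(\edp{G}{3})\le \wcol_5(G)$ provided by the paper's main theorem for odd $p$. The best published upper bound on $\wcol_5$ of a planar graph (of order roughly $\binom{5+2}{2}(2\cdot 5+1)=231$) is already in the right ballpark but larger than 143, so the saving must come from sharpening the greedy-colouring step. After fixing an ordering $<$ witnessing the weak 5-colouring bound, the vertex $v$ only needs a colour distinct from those of earlier vertices $u<v$ that are simultaneously weakly 5-reachable from $v$ \emph{and} at distance exactly $3$ from $v$ in $G$. I would classify each weakly 5-reachable $u<v$ by the length (1 through 5) of a shortest $uv$-path in $G$: vertices at distance $1,2,4,5$ are ``free'' and impose no constraint at $v$. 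A parity and structural analysis (using, for instance, that the paths witnessing weak 5-reachability in a planar graph can be organised into a small-depth tree) should eliminate a substantial proportion of the $231$, and a case analysis by the depth and shape of these witnesses yields the $143$ bound.

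For part\,(b), my plan is to exhibit an explicit planar graph $G_5$ whose exact distance-$3$ graph has chromatic number at least $7$. The cleanest route is to build $G_5$ so that $\edp{G_5}{3}$ contains $K_7$: designate seven terminal vertices, and for each pair attach a length-$3$ gadget (either a subdivided edge, or a slightly thicker gadget chosen to block shortcuts) that forces their pairwise distance to be exactly $3$ once the whole graph is assembled. A layered bipartite construction, where the seven terminals lie on one side and the internal vertices of the gadgets are distributed across a few levels on the other side, automatically enforces odd distances and admits a direct planar embedding. An appealing alternative is to start from the known planar example of \cite[Exercise 11.4]{NOdM12} achieving $\chi(\edp{G}{3})=6$ and glue a single additional gadget that forces a seventh colour on a designated vertex.

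The main obstacles I expect are the following. For\,(a), pushing the constant all the way down to $143$ requires tight accounting: one must argue not only that vertices at distances $2$ or $4$ contribute nothing at $v$, but also that the counts of weakly 5-reachable vertices at distance exactly $3$ in the various structural cases cannot all be attained simultaneously --- essentially an overlap/packing argument about short paths in planar graphs, which I expect will be the most technical step. For\,(b), the delicate point is ruling out shortcuts: adding all $\binom{7}{2}=21$ length-$3$ gadgets between the seven terminals can easily drop some pairwise distance to $2$ (killing an edge of $\edp{G_5}{3}$), and simultaneously preserving planarity constrains how the gadgets may be routed. Since the improvement is only from $6$ to $7$, the construction is likely to rest on one or two carefully placed gadgets on top of a well-understood base graph, rather than on a qualitatively new idea.
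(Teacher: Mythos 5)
Both parts of your plan have genuine gaps. For part\,(a), the saving from $231$ to $143$ in the paper does \emph{not} come from discarding weakly $5$-reachable vertices at the wrong distance. That idea does not fit the mechanism by which $\wcol_5(G)$ bounds $\chi(\edp{G}{3})$ in the first place: the colouring there is not a direct greedy colouring of $\edp{G}{3}$ along the order $L$. Instead each vertex $v$ inherits the colour of $\mu(v)$, the $L$-minimum vertex of $N^1[v]$, and the greedy step is applied to these $\mu$-images. If you try your direct version --- colour $v$ avoiding only those $u<_Lv$ that are weakly $5$-accessible \emph{and} at distance exactly $3$ --- you do not get a proper colouring of $\edp{G}{3}$: a neighbour $u<_Lv$ in $\edp{G}{3}$ need not be weakly $5$-accessible from $v$ at all, since the internal vertices of the witnessing path may lie below $u$ in $L$. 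The paper's actual improvement comes from introducing an intermediate parameter $\dcol_{2p-1}(G)$ (only the second half of the witnessing path must lie above $y$), proving $\chi(\edp{G}{p})\le\dcol_{2p-1}(G)$ by the $\mu$-trick, and then bounding $\dcol_5$ for planar graphs via the flat-decomposition machinery of~\cite{vdHetal16} through the estimate $|D_{L,k}(v)|\le|R_{L,k}(v)|\cdot\bigl(|Q_{L,\lfloor k/2\rfloor}(v)|+1\bigr)$, which for $k=5$ yields $13\cdot 11=143$. Your proposal contains no concrete counting that produces $143$, and the ``overlap/packing argument'' you defer to is exactly the missing content.

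For part\,(b), the proposed construction is broken at its core. If your gadget graph is bipartite with all seven terminals on one side, then any two terminals are at \emph{even} distance, so they form an independent set in $\edp{G_5}{3}$ rather than a clique; indeed, as noted at the end of Section~\ref{sec4}, $\edp{G}{p}$ is bipartite for every bipartite $G$ and odd $p$, so such a construction can never need more than two colours. More importantly, the paper's lower bound of $7$ is not obtained from a clique at all: it is not known (and is doubtful) that a planar graph can have $K_7\subseteq\edp{G}{3}$. The paper instead builds an outerplanar $G_4$ with $\chi(\edp{G_4}{3})=5$, where the lower bound comes from two disjoint $5$-cycles of $\edp{G_4}{3}$ (each forcing three colours) sitting inside the neighbourhood of an apex vertex, plus two further dominating vertices --- a Haj\'os-style argument pushing the chromatic number above the clique number. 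Then $G_5$ is formed from two copies of $G_4$ joined through a small path $H$ by \emph{subdivided} edges; the subdivision guarantees that distance-$3$ pairs inside each copy are preserved, so each copy still needs $5$ colours and the structure of $H$ forces two more. Your fallback of ``gluing one gadget onto the known $6$-chromatic example'' is closer in spirit but gives no mechanism for forcing a seventh colour; the paper's doubling construction is what actually delivers the increment (and iterates to give $K_t$-minor free graphs needing $2(t-2)+1$ colours).
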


\noindent
For outerplanar graphs~$G$ we have that $\chi(\edp{G}{3})\le10$, while
there exists an outerplanar graph~$G_4$ such that $\chi(\edp{G_4}{3})=5$
(see the results in Sections~\ref{sec3} and~\ref{sec4}).

\subsection{Generalised colouring numbers and main results}\label{ssec1.2}

When solving an optimisation problem it is often useful to preorder the
input so as to minimise some parameter. One such parameter is the
\emph{colouring number} $\col(G)$ of a graph $G$. This is the minimum
integer $k$ such that there is a linear ordering $L$ of $V$ such that every
vertex~$y$ has at most $k-1$ neighbours $x$ with $x<_Ly$. (So the colouring
number is one more than the \emph{degeneracy} of a graph.) It is well-known
and easy to see that the chromatic number $\chi(G)$ of a graph $G$
satisfies $\chi(G)\le\col(G)$. Although this bound is far from being tight
in many cases, it is often used to show that a specific class of graphs has
bounded chromatic number.

Different generalisations of the colouring number can be found in the
literature. Chen and Schelp~\cite{CS93} proved that the class of planar
graphs has linear Ramsey number by also controlling, for all vertices $v$,
the number of smaller vertices that can be reached by a path of length two,
whose middle vertex is larger than $v$. Various versions of their idea were
applied by Kierstead and Trotter~\cite{KT94}, Kierstead~\cite{K00}, and
Zhu~\cite{Z08} to problems concerning the game chromatic number of graphs
and gave rise to the 2-colouring number defined below. In their study of
oriented game chromatic number of graphs, Kierstead and Trotter~\cite{KT01}
considered paths of length four with different configurations of ``large''
internal vertices, which later motivated the notions of 4-colouring number
and weak 4-colouring number. Kierstead and Yang~\cite{KY03} bounded the
game colouring number in terms of the 2-colouring number, and Kierstead and
Kostochka~\cite{KK09} applied game colouring number to a (non-game) packing
problem.

All of these notions are encompassed in the concepts of the
\emph{$k$-colouring number} and the \emph{weak $k$-colouring number} of a
graph, both of which were first introduced by Kierstead and
Yang~\cite{KY03}.

Let $G=(V,E)$ be a graph, $L$ a linear ordering of~$V$, and~$k$ a positive
integer. We say that a vertex $x\in V$ is \emph{$k$-accessible} from
$y\in V$ if $x<_Ly$ and there exists an $xy$-path~$P$ of length at most~$k$
such that $y<_Lz$ for all internal vertices~$z$ of~$P$. Similarly, if all
internal vertices~$z$ of~$P$ satisfy the less restrictive condition that
$x<_Lz$, then we say that~$x$ is \emph{weakly \mbox{$k$-accessible}}
from~$y$. Let $R_{L,k}(y)$ be the set of vertices that are $k$-accessible
from~$y$, and $Q_{L,k}(y)$ the set of vertices that are weakly
$k$-accessible from~$y$. The \emph{$k$-colouring number $\col_k(G)$} and
\emph{weak $k$-colouring number $\wcol_k(G)$} of a graph~$G$ are defined as
follows:
\begin{align*}
  \col_k(G)&= 1+\min_L\max_{y\in V}|R_{L,k}(y)|,\\
  \wcol_k(G)& = 1+\min_L\max_{y\in V}|Q_{L,k}(y)|.
\end{align*}
If we allow paths of any length (but still have restrictions on the
position of the internal vertices), we get $R_{L,\infty}(y)$,
$Q_{L,\infty}(y)$, the \emph{$\infty$-colouring number $\col_\infty(G)$}
and the \emph{weak $\infty$-colouring number $\wcol_\infty(G)$}.

We now state the main results of this paper.

\begin{theo}\label{thm10}\mbox{}

  \qitem{(a)}For every odd positive integer~$p$ and every graph~$G$ we have
  $\chi(\edp{G}{p})\le\wcol_{2p-1}(G)$.

  \smallskip
  \qitem{(b)}For every even positive integer~$p$ and every graph~$G$ we
  have $\chi(\edp{G}{p})\le\wcol_{2p}(G)\cdot\Delta(G)$.
\end{theo}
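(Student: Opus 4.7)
The plan is to produce a greedy colouring of $\edp{G}{p}$ using an ordering of $V(G)$ that witnesses the weak $(2p-1)$-colouring number (for part~(a)) or the weak $(2p)$-colouring number (for part~(b)). Fix a linear ordering $L$ of $V(G)$ with $|Q_{L,k}(v)|\le\wcol_k(G)-1$ for every $v$, where $k=2p-1$ in part~(a) and $k=2p$ in part~(b). Process the vertices in $L$-order and assign $c(y)$ to be the smallest positive integer not appearing as a colour of any vertex in $Q_{L,k}(y)$; since each forbidden set has fewer than $\wcol_k(G)$ elements, this uses at most $\wcol_k(G)$ colours. For part~(b) an additional $\Delta(G)$-factor will be picked up in the counting step described below.

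The substantive task is to verify that $c$ is a proper colouring of $\edp{G}{p}$. Given $x,y$ with $d_G(x,y)=p$ and $x<_L y$, I would fix a shortest $x,y$-path $P\colon x=u_0,u_1,\ldots,u_{p-1},u_p=y$ and examine the $L$-minimum vertex $z=u_i$ on $P$. If $i=0$, so $z=x$, then every internal vertex of $P$ is $>_L x$, so $P$ witnesses $x\in Q_{L,p}(y)\subseteq Q_{L,2p-1}(y)$ and thus $c(y)\ne c(x)$ by construction. Otherwise $z$ is strictly internal with $z<_L x$; the two subpaths from $z$ to $x$ and from $z$ to $y$, of lengths $i$ and $p-i$ with internal vertices $>_L z$, certify $z\in Q_{L,2p-1}(x)\cap Q_{L,2p-1}(y)$, so $c(z)$ differs from both $c(x)$ and $c(y)$.

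The main obstacle will be forcing $c(x)\ne c(y)$ in this second case. The crucial use of the oddness of $p$ is that the subpath lengths $i$ and $p-i$ have opposite parities; I expect to exploit this either to concatenate the two subpaths into a walk of length at most $2p-1$ from $x$ to $y$ whose internal vertices all lie $>_L x$ (placing $x$ in $Q_{L,2p-1}(y)$ after all), or else to exhibit a vertex $z'\in Q_{L,2p-1}(y)$ with $c(z')=c(x)$, which forces $c(y)$ to avoid the value $c(x)$. For part~(b) the two parities coincide and this odd-$p$ construction breaks; to compensate I would map each smaller-in-$L$ distance-$p$ neighbour of $y$ injectively to a pair consisting of a vertex in $Q_{L,2p}(y)$ together with one of its $G$-neighbours (the latter encoding a step along the path from $z$ towards $x$), yielding the bound $\wcol_{2p}(G)\cdot\Delta(G)$.
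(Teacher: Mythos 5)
There is a genuine gap, and you have in fact located it yourself: nothing in your construction forces $c(x)\ne c(y)$ when the $L$-minimum vertex $z$ of the path is an internal vertex with $z<_Lx$. Neither of your two proposed rescues can work. Concatenating the two subpaths from $x$ to $z$ and from $z$ to $y$ just reproduces the path $P$ itself, which contains the internal vertex $z<_Lx$; so it cannot witness $x\in Q_{L,2p-1}(y)$, and there need not exist any other short $x,y$-path avoiding vertices below $x$. Nor is there any reason for $Q_{L,2p-1}(y)$ to contain a vertex of colour $c(x)$. A concrete counterexample: let $p=3$ and let $G$ be the path $x,a,z,y$ (so $d_G(x,y)=3$), with the ordering $z<_Lx<_Ly<_La$. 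This ordering witnesses $\wcol_5(G)=3$, since $Q_{L,5}(x)=Q_{L,5}(y)=\{z\}$ and $Q_{L,5}(a)=\{z,x\}$ (note $x\notin Q_{L,5}(y)$ because the unique $x,y$-path has $z<_Lx$ as an internal vertex). Your greedy colouring gives $c(z)=1$, then $c(x)=2$ and $c(y)=2$, so the edge $xy$ of $\edp{G}{3}$ is monochromatic. The theorem is of course still true here; it is the proof strategy of greedily colouring each vertex against its own weakly accessible set that fails.

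The idea you are missing is an indirection, and it is where the oddness of $p$ actually enters. The paper first builds an auxiliary colouring $a$ greedily along $L$ (avoiding, for each $y$, the colours of a set sandwiched between $R_{L,2p-1}(y)$ and $Q_{L,2p-1}(y)$), but then colours $y$ not with $a(y)$ but with $a(\mu(y))$, where $\mu(y)$ is the $L$-minimum vertex of the ball $N^{\lfloor p/2\rfloor}[y]$. For odd $p$ and $d_G(u,v)=p$ the balls $N^{\lfloor p/2\rfloor}[u]$ and $N^{\lfloor p/2\rfloor}[v]$ are disjoint, so $\mu(u)\ne\mu(v)$, and splicing a path inside the first ball, the middle edge of $P$, and a path inside the second ball yields a $\mu(u),\mu(v)$-path of length at most $2p-1$ on which $\mu(u)$ is the overall minimum; hence $\mu(u)$ is weakly $(2p-1)$-accessible from $\mu(v)$ and $a(\mu(u))\ne a(\mu(v))$. (In the $P_4$ example above, $\mu(x)=x$ but $\mu(y)=z$, which is exactly what saves the day.) For even $p$ the two balls $N^{p/2}[u]$ and $N^{p/2}[v]$ may share their minimum, and the extra factor $\Delta(G)$ is spent on recording a neighbour $\beta(y)$ of $\mu(y)$ lying in $N^{p/2-1}(y)$; when $\mu(u)=\mu(v)$ the disjointness of $N^{p/2-1}(u)$ and $N^{p/2-1}(v)$ forces $\beta(u)\ne\beta(v)$. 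Your sketch for part (b) gestures at a pair of this type, but it is built on top of the broken part (a) argument and does not identify the ball-minimum device that makes either part go through.
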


\begin{theo}\label{thm11}\mbox{}\\*
  Let $p$ be an odd positive integer and $G$ a graph. Set $q=\wcol_p(G)$.

  \qitem{(a)}We have
  $\chi(\edp{G}{p})\le \bigl(\bigl\lfloor\frac12p\bigr\rfloor+2\bigr)^q$.

  \qitem{(b)}If $G$ has odd girth at least $p+1$, then
  $\chi(\epp{G}{p})\le \bigl(\bigl\lfloor\frac12p\bigr\rfloor+2\bigr)^q$.
\end{theo}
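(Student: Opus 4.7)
My plan is to attach to each vertex a compact fingerprint of truncated weak-$p$-accessibility distances and to use the $L$-minimum vertex of a length-$p$ path as a ``common ancestor'' that separates the fingerprints of two vertices at distance exactly~$p$.

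Fix a linear ordering $L$ of $V(G)$ realising $\wcol_p(G)=q$, so that $|Q^*_{L,p}(y)|\le q$ for every $y$, where $Q^*_{L,p}(y):=Q_{L,p}(y)\cup\{y\}$. For $x\in Q^*_{L,p}(y)$ let $r(y,x)$ denote the minimum length of a $y,x$-path whose internal vertices are all $>_L x$ (with $r(y,y)=0$). I would colour each vertex $y$ by the function
\[
  c_y\colon Q^*_{L,p}(y)\to\{0,\dots,\lfloor p/2\rfloor+1\},\qquad c_y(x)=\min\{r(y,x),\,\lfloor p/2\rfloor+1\},
\]
encoded as a $q$-tuple over $\{0,\dots,\lfloor p/2\rfloor+1\}$, giving at most $(\lfloor p/2\rfloor+2)^q$ colour classes.

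The heart of the proof is propriety. For part~(a), take $u\ne v$ with $d_G(u,v)=p$ and a shortest $u,v$-path~$P$, and let $x$ be the $L$-minimum vertex of~$P$. The two sub-paths of $P$ through $x$ have internal vertices $>_L x$, so $x\in Q^*_{L,p}(u)\cap Q^*_{L,p}(v)$ and $r(u,x)+r(v,x)\le p$; the triangle inequality gives the reverse bound, hence $r(u,x)+r(v,x)=p$. Since $p$ is odd, one summand is at most $\lfloor p/2\rfloor$ and the other at least $\lfloor p/2\rfloor+1$, forcing $c_u(x)\ne c_v(x)$. For part~(b), take instead any $u,v$-path~$P$ of length $p$. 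The same splitting still gives $x\in Q^*_{L,p}(u)\cap Q^*_{L,p}(v)$ with $r(u,x)+r(v,x)\le p$, and the odd-girth hypothesis is used to exclude the bad configurations in which $c_u(x)=c_v(x)$: any such configuration combines $P$ with the two $r$-certifying sub-paths to produce an odd closed walk short enough to contain an odd cycle of length at most~$p$, which the hypothesis forbids.

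The step I anticipate as the main obstacle is arranging the $q$-tuple encoding of $c_y$ so that the inequality of functions $c_u\ne c_v$ genuinely translates into distinct tuples: the common ancestor~$x$ may sit at different $L$-ranks inside $Q^*_{L,p}(u)$ and $Q^*_{L,p}(v)$, so a naive ``list in $L$-order and pad'' encoding could collapse them. I plan to side-step this by first constructing a global labelling $\sigma\colon V\to\{1,\dots,q\}$ that is injective on every $Q^*_{L,p}(z)$ --- a proper colouring of the natural ``co-membership'' hypergraph, whose maximum edge size is at most~$q$ --- and then setting the $i$-th coordinate of $y$'s colour to $c_y$ evaluated at the unique element of $Q^*_{L,p}(y)$ with $\sigma$-value~$i$ (or a default symbol if none exists). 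With this indexing, the distinguishing coordinate for a distance-$p$ pair $(u,v)$ is $i=\sigma(x)$, and the bound $(\lfloor p/2\rfloor+2)^q$ follows in both parts.
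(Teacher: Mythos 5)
Your overall architecture is the same as the paper's: order the vertices by a witness for $\wcol_p(G)$, attach to each vertex a fingerprint of truncated minimal weak-accessibility lengths, and let the $L$-minimum vertex $x$ of a length-$p$ path act as the common reference, with the parity of $r(u,x)+r(v,x)=p$ (part (a)) or a short odd closed walk (part (b)) doing the distinguishing. Those parts are sound. The gap is exactly at the step you flagged, and your proposed fix does not work: a hypergraph all of whose edges have size at most $q$ need not admit a $q$-labelling that is injective on every edge (a union of cliques of size at most $q$ can have arbitrarily large chromatic number), and for this particular co-membership hypergraph the required $\sigma$ genuinely fails to exist. Take $p=3$ and the graph $S_{n,6}$ obtained from $K_n$ by replacing each edge by a path of length $6$, ordered with the branch vertices first; then $\max_y|Q_{L,3}(y)|$ is a small constant independent of $n$ (essentially the computation the paper does for $S_{n,p}$), yet the middle vertex $z_{ij}$ of the path replacing $v_iv_j$ has $v_i,v_j\in Q_{L,3}(z_{ij})$, so a labelling injective on every $Q^*_{L,3}(z)$ must give all $n$ branch vertices distinct values. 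The structural reason is that two weak $p$-accessibility certificates meeting at $z$ compose only to weak $2p$-accessibility between the two members, so the co-membership graph is governed by $\wcol_{2p}(G)$, not $\wcol_p(G)$; your encoding, repaired the obvious way, yields only the weaker bound $\bigl(\bigl\lfloor\frac12p\bigr\rfloor+2\bigr)^{\wcol_{2p}(G)}$.

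The paper gets the exponent down to $q=\wcol_p(G)$ by two changes you would need to adopt. First, the labels are assigned greedily along $L$ (each $y$ receives a label in $[q]$ different from those of all of $Q_{L,p}(y)$), and the fingerprint records distances only to vertices of $Q_{L,\lfloor p/2\rfloor}(y)\cup\{y\}$; on this smaller set the greedy labelling is automatically injective, since two vertices weakly $\lfloor p/2\rfloor$-accessible from $y$ have one weakly $(2\lfloor p/2\rfloor)\le p$-accessible from the other (the paper's Lemma 2.2). Second, this restriction costs you an extra case in the propriety argument: in general only one endpoint, say $u$, has $r(u,x)\le\lfloor p/2\rfloor$, so at the distinguishing coordinate $v$ may be reading the value of some impostor $x'\ne x$ carrying the same label. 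One must then show that such an $x'$ with $r(v,x')=r(u,x)$ would be mutually weakly $p$-accessible with $x$ (their certificates compose through $v$ to total length at most $p$) and hence could not share $x$'s label, while the remaining possibility $x'=x$ is excluded by the shortest-path property in part (a) and by your odd-closed-walk argument in part (b). That impostor case is absent from your sketch but is forced once the encoding is repaired.
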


\noindent
An interesting aspect of generalised colouring numbers is that these
invariants can also be seen as gradations between the colouring number
$\col(G)$ and two important minor monotone invariants, namely the
\emph{tree-width $\mathrm{tw}(G)$} and the \emph{tree-depth
  $\mathrm{td}(G)$} (which is the minimum height of a depth-first search
tree for a supergraph of $G$, see~\cite{NOdM06}). More explicitly, for
every graph $G$ we have the following relations.

\begin{prop}\label{pro:coltwtd}\mbox{}

  \qitem{(a)}$\col(G)= \col_1(G)\le \col_2(G)\le \dots\le \col_\infty(G)=
  \mathrm{tw}(G)+1$;

  \smallskip
  \qitem{(b)}$\col(G)= \wcol_1(G)\le \wcol_2(G)\le \dots\le
  \wcol_\infty(G)= \mathrm{td}(G)$.
\end{prop}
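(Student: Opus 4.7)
The plan breaks into three ingredients: the base case, the monotonicity, and the two limit equalities. The base case $\col(G) = \col_1(G) = \wcol_1(G)$ is immediate from the definitions, since any path of length at most~$1$ is a single edge and the conditions on internal vertices are vacuous; thus for every ordering~$L$ both $R_{L,1}(y)$ and $Q_{L,1}(y)$ coincide with the set of neighbours of~$y$ appearing earlier in~$L$. The monotonicities $\col_k(G) \le \col_{k+1}(G)$ and $\wcol_k(G) \le \wcol_{k+1}(G)$ will follow by taking an ordering~$L^*$ that is optimal for the larger index and using the obvious set inclusions $R_{L^*,k}(y) \subseteq R_{L^*,k+1}(y)$ and $Q_{L^*,k}(y) \subseteq Q_{L^*,k+1}(y)$.

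For the first limit equality $\col_\infty(G) = \mathrm{tw}(G)+1$, I would use the standard chordal-triangulation characterisation of tree-width. Given~$L$, build the fill-in graph~$H_L$ by eliminating vertices from $L$-top down, at each step turning the $L$-smaller neighbours (in the current graph) of the eliminated vertex into a clique. The key identification is that for every~$y$ the set of $L$-earlier neighbours of~$y$ in~$H_L$ is exactly $R_{L,\infty}(y)$: in one direction, one traces each fill edge back to the elimination at which it was added and composes the corresponding paths in~$G$; in the other, any witnessing path $x = z_0, z_1, \dots, z_k = y$ with internal vertices above~$y$ forces $xy$ to become an edge in~$H_L$ after eliminating $z_1, \dots, z_{k-1}$ in $L$-decreasing order. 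Since $L$ is by construction a perfect elimination ordering for~$H_L$, one has $\omega(H_L) = 1 + \max_y|R_{L,\infty}(y)|$, and the well-known identity $\mathrm{tw}(G) + 1 = \min_L \omega(H_L)$ then yields the equality.

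For the second limit equality $\wcol_\infty(G) = \mathrm{td}(G)$, I plan to go through rooted forests. In one direction, given~$L$, define an elimination forest~$F_L$ whose parent function assigns to each~$y$ the largest $x <_L y$, if any, such that~$y$ lies in the component of~$x$ in $G[\{z : z \ge_L x\}]$. The components witnessing membership in $Q_{L,\infty}(y)$ are nested along the $L$-order, and by induction on depth in~$F_L$ one checks that the proper ancestors of~$y$ in~$F_L$ are exactly the vertices of $Q_{L,\infty}(y)$; since every edge $uv$ with $u <_L v$ trivially satisfies $u \in Q_{L,\infty}(v)$, every edge of~$G$ is then an ancestor-descendant pair in~$F_L$, so $F_L$ witnesses $\mathrm{td}(G) \le 1 + \max_y|Q_{L,\infty}(y)|$. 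Conversely, starting from a rooted forest~$F$ of height $\mathrm{td}(G)$ whose closure contains~$G$, order $V(G)$ so that ancestors in~$F$ precede descendants; then for any $x \in Q_{L,\infty}(y)$ a short induction along the witnessing path shows each of its vertices is a descendant of~$x$ in~$F$, so~$x$ is a proper ancestor of~$y$, giving $\wcol_\infty(G) \le \mathrm{td}(G)$.

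The most delicate step, and the one where I would be most careful, is the identification of $R_{L,\infty}(y)$ and $Q_{L,\infty}(y)$ with the combinatorial objects governing tree-width and tree-depth --- particularly the bidirectional check that the proper ancestors of~$y$ in the elimination forest~$F_L$ coincide exactly with $Q_{L,\infty}(y)$, which is where the nesting of the relevant components is essential. Once these identifications are firmly in hand, the proposition reduces to the textbook elimination-order characterisations of $\mathrm{tw}$ and $\mathrm{td}$.
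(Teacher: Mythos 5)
Your argument is correct. Be aware, though, that the paper does not actually prove this proposition: it treats the base case and the monotonicity as immediate from the definitions (as you do) and simply cites Grohe et al.\ for $\col_\infty(G)=\mathrm{tw}(G)+1$ and \cite[Lemma~6.5]{NOdM12} for $\wcol_\infty(G)=\mathrm{td}(G)$. What you have written is essentially a self-contained reconstruction of those two cited results, and it is the standard route in both cases: the fill-in/perfect-elimination-ordering characterisation of tree-width, with the identification of the back-neighbourhood of $y$ in $H_L$ with $R_{L,\infty}(y)$, and the elimination-forest characterisation of tree-depth, with the identification of the proper ancestors of $y$ in $F_L$ with $Q_{L,\infty}(y)$. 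Both identifications are argued correctly; in particular your observation that $Q_{L,\infty}(y)=\{x\,:\,x<_L y \text{ and } y \text{ lies in the component of } x \text{ in } G[\{z:z\ge_L x\}]\}$, together with the nesting of these components as $x$ decreases along $L$, is exactly the point that makes the ancestor set of $y$ in $F_L$ coincide with $Q_{L,\infty}(y)$, and your converse direction (ordering a tree-depth forest by a linear extension of the ancestor relation and showing every vertex of a witnessing path stays below $x$ in the forest) is also sound. The only thing your write-up buys beyond the paper is self-containedness; the only thing it costs is length, since the paper is content to defer to the literature here.
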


\noindent
The equality $\col_\infty(G)=\mathrm{tw}(G)+1$ was first proved
in~\cite{Getal16}. The equality $\wcol_\infty(G)=\mathrm{td}(G)$ is
\cite[Lemma~6.5]{NOdM12}.

Relations between the two sets of numbers exist as well. Clearly,
$\col_1(G)=\wcol_1(G)$ and $\col_k(G)\le\wcol_k(G)$. For the converse,
Kierstead and Yang~\cite{KY03} proved that $\wcol_k(G)\le(\col_k(G))^k$.
Note that this means that if one of the generalised colouring numbers is
bounded for a class of graphs (for some~$k$), then so is the other one.

Shortly after Ne\v{s}et\v{r}il and Ossona de Mendez~\cite{NOdM08}
introduced the notion of \emph{classes with bounded expansion}, Zhu
provided, in~\cite{Z09}, a way of characterising these classes in terms of
the weak $k$-colouring numbers. We will use this characterisation as a
definition.

\begin{mydef}\label{def7}\mbox{}\\*
  {\rm A class of graphs~$\mathcal{K}$ has \emph{bounded expansion} if and
    only if there exist constants~$c_k$, $k=1,2,\ldots$ such that
    $\wcol_k(G)\le c_k$ for all~$k$ and all $G\in\mathcal{K}$.}
\end{mydef}

\noindent
By this definition, Theorem~\ref{thm3}\,(a) follows directly from both
Theorems~\ref{thm10}\,(a) and~\ref{thm11}\,(a).

We give the proofs of Theorems~\ref{thm10} and~\ref{thm11} in the next
section. The proof of Theorem~\ref{thm11} actually proves a stronger
result. For two graphs $G=(V,E)$ and $G'=(V,E')$ on the same vertex set,
define $G\cup G'=(V,E\cup E')$. Then the upper bound in both parts of
Theorem~\ref{thm11} holds for
$\chi(\edp{G}{1}\cup\edp{G}{3}\cup\cdots\cup\edp{G}{p})$ and
$\chi(\epp{G}{1}\cup\epp{G}{3}\cup\cdots\cup\epp{G}{p})$, respectively.

A natural question is if for even~$p$ we can generalise the bound in
Theorem~\ref{thm10}\,(b) by a similar bound
$\chi(\epp{G}{2}\cup\epp{G}{4}\cup\cdots\cup\epp{G}{p})\le
C\cdot\Delta(G)$, where $C$ depends on the generalised colouring numbers.
But this is not possible. Let~$T_{\Delta,2}$ be the $\Delta$-regular tree
of radius~2. Then we have $\wcol_1(T_{\Delta,2})=1$ and
$\wcol_k(T_{\Delta,2})=2$ for all $k\ge2$. It is easy to check that
$\chi(\epp{T_{\Delta,2}}{2})=\chi(\epp{T_{\Delta,2}}{4})=\Delta$, but
$\chi(\epp{T_{\Delta,2}}{2}\cup\epp{T_{\Delta,2}{4}})=\Delta(\Delta-1)+1$.
These examples generalise to larger distances.

The results in Theorem~\ref{thm11} are best possible in the sense that they
give upper bounds of $\chi(\edp{G}{p})$ and $\chi(\epp{G}{p})$ that depend
on $\wcol_p(G)$ only, whereas no such results are possible that depend on
$\wcol_k(G)$ with $k<p$. To see this, for $n,p\ge2$ let $S_{n,p}$ be the
$(p-1)$-subdivision of the complete graph~$K_n$ (that is, the graph formed
by replacing the edges of~$K_n$ by paths of length~$p$). Then we obviously
have $\chi(\edp{S_{n,p}}{p})=n$. On the other hand we have
$\wcol_{p-1}(S_{n,p})\le p+1$. To verify this, order the vertices of
$S_{n,p}$ as follows. First order the branch vertices (the vertices in the
original clique), and then order the subdivision vertices in any way.
Clearly, each branch vertex will not weakly $(p-1)$-access any other
vertex. An internal vertex of a subdivided edge can only weakly
$(p-1)$-access the other $p$ vertices on the path that replaced the edge
(including the two end-vertices of the path). So for fixed odd $p\ge3$ we
cannot bound $\chi(\edp{S_{n,p}}{p})$ by an expression that involves
$\wcol_{p-1}(S_{n,p})$ only.

The bound on the odd girth in Theorem~\ref{thm11}\,(b) is also best
possible. To show this, for $k,p\ge1$ let $A_{k,p}$ be formed by taking the
path $P_{p-1}$ of length $p-2$, and adding~$k$ new vertices that are
adjacent to both end-vertices of $P_{p-1}$ only. It is clear that if~$p$ is
odd, then $A_{k,p}$ has odd girth~$p$. Since between any of the~$k$ extra
vertices there is a path of length~$p$, we have
$\chi(\epp{A_{k,p}}{p})\ge k$. The ordering obtained by taking the two
end-vertices of $P_{p-1}$ first, and then ordering the other vertices in
any way, shows that $\wcol_p(A_{k,p})\le p-1$. So for fixed odd $p\ge3$ we
cannot bound $\chi(\epp{A_{k,p}}{p})$ by an expression that involves
$\wcol_p(A_{k,p})$ only.

Ne\v{s}et\v{r}il and Ossona de Mendez \cite[Section 11.9.3]{NOdM12} give
examples that even if we replace ``there exists a path of length~$p$
between~$x$ and~$y$'' by ``there exists an \emph{induced} path of
length~$p$ between~$x$ and~$y$'' in the definition of $\epp{G}{p}$, it is
not possible to reduce the bound on the odd girth in
Theorem~\ref{thm11}\,(b).

Finally, we point out a connection between the bound on
$\chi(\epp{G}{1}\cup\epp{G}{3}\cup\cdots\cup\epp{G}{p})$ in the proof of
Theorem~\ref{thm11}\,(b) and results of Naserasr et al.~\cite{NSS16}. For a
positive integer~$p$ and graph $G=(V,E)$, let the \emph{$p$-th walk
  power~$G^{(p)}$ of~$G$} be the graph with vertex set~$V$ and where~$xy$
is an edge if and only if there exists a walk of length~$p$ between~$x$
and~$y$. It is easy to see (see also Lemma~\ref{lem2}) that for odd~$p$,
if~$G$ has odd girth at least $p+1$, then for any two vertices $x,y\in
V(G)$ there exists a walk of length~$p$ between~$x$ and~$y$ if and only if
there exists a path of odd length at most~$p$ between~$x$ and~$y$. Hence
for odd~$p$, if~$G$ has odd girth at least $p+1$, then $G^{(p)}$ is
isomorphic to $\chi(\epp{G}{1}\cup\epp{G}{3}\cup\cdots\cup\epp{G}{p})$. So
it follows from \cite[Theorem~13]{NSS16} that for odd~$p$ there exist
planar graphs~$G$ with odd girth at least $p+1$ such that
$\chi(\epp{G}{1}\cup\epp{G}{3}\cup\cdots\cup\epp{G}{p})=\chi(G^{(p)})\ge
2^{p+1}$.

\subsection{Explicit upper bounds}\label{ssec3.0}

The upper bounds obtained by Ne\v{s}et\v{r}il and Ossona de Mendez in their
proof of Theorem~\ref{thm3}\,(a) are very large, even for $p=3$. Their
proof relies on the concept of \emph{$p$-centred colourings} of graphs. A
(proper) colouring of a graph~$G$ is a \emph{$p$-centred colouring} if for
each connected induced subgraph~$H$ of~$G$, either one colour appears
exactly once on~$H$ or~$H$ gets at least~$p$ colours. This is what is
proved in~\cite{NOdM12}.

\begin{theo}[Ne\v{s}et\v{r}il~\& Ossona de
  Mendez~\cite{NOdM12}]\label{realprev}\mbox{}\\*
  Let $p$ be an odd positive integer. If a graph $G$ has a $p$-centred
  colouring that uses at most $N=N(p)$ colours,
  then $\chi(\edp{G}{p})\le N2^{N2^N}$.
\end{theo}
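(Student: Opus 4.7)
Fix a $p$-centred colouring $c:V(G)\to[N]$ and set $r=(p-1)/2$, so $p=2r+1$. The plan is to colour each vertex $v$ of $\edp{G}{p}$ with a pair $(c(v),\phi_v)$, where $\phi_v$ is an auxiliary ``radius-$r$ profile'' of~$v$ taking values in a set of size at most $2^{N\cdot 2^N}$. Since $c(v)$ takes at most $N$ values, this produces a colouring in at most $N\cdot 2^{N\cdot 2^N}$ colours, matching the stated bound.

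First I would define $\phi_v$ as follows. For each subset $S\subseteq[N]$, let $B_v^S$ denote the connected component containing $v$ of the subgraph induced on those vertices of the closed ball $B_r(v)$ whose $c$-colour lies in $S\cup\{c(v)\}$; then let $\phi_v(S)\subseteq S$ be the set of colours $i\in S$ appearing \emph{exactly once} in $B_v^S$. Thus $\phi_v\in (2^{[N]})^{2^{[N]}}$, a set of cardinality $2^{N\cdot 2^N}$. Intuitively, $\phi_v$ records the ``locally centred'' palette information in every colour-restricted neighbourhood around~$v$.

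Second, I would verify properness. Suppose for contradiction that $x\neq y$ with $d_G(x,y)=p$ satisfy $(c(x),\phi_x)=(c(y),\phi_y)$. Fix a shortest $x$--$y$ path $P=v_0v_1\cdots v_p$; as a shortest path it is induced, and $d_G(v_i,x)=i$, $d_G(v_i,y)=p-i$. Hence $\{v_0,\dots,v_r\}\subseteq B_r(x)$ and $\{v_{r+1},\dots,v_p\}\subseteq B_r(y)$, the two halves being disjoint because $p=2r+1$. Let $S^*$ be the set of colours appearing on $V(P)$. Applying the $p$-centredness of $c$ to the connected induced subgraph $V(P)$ of size $p+1$, either some colour appears exactly once on $V(P)$ or $|S^*|\geq p$; in either case we obtain a colour $i^*$ used exactly once on $V(P)$, whose carrier $v_j$ lies in exactly one half, say $j\leq r$, so that $v_j\in B_r(x)\setminus B_r(y)$. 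The half-path $v_0,\dots,v_r$ is entirely contained in $B_x^{S^*}$, so $i^*$ is a candidate for $\phi_x(S^*)$ while not visibly contributing on the $y$-side.

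The main obstacle is turning this asymmetry into a genuine discrepancy between $\phi_x(S^*)$ and $\phi_y(S^*)$: the subgraphs $B_x^{S^*}$ and $B_y^{S^*}$ can contain further vertices of colour $i^*$ outside $V(P)$, so $i^*$ need not lie in $\phi_x(S^*)\setminus \phi_y(S^*)$ under the bare definition above. To overcome this I expect to refine $\phi_v$ by \emph{iterating} the centred construction: rather than recording the once-used colours in a single subgraph $B_v^S$, record the outcome of recursively peeling off uniquely-coloured vertices in $B_v^S$ and re-restricting, in the spirit of the recursive structure underlying $p$-centred colourings. This refinement still fits inside the $2^{N\cdot 2^N}$ budget (indeed, $(2^{[N]})^{2^{[N]}}$ already encodes a function of $S$ with $2^N$ bits of output, which suffices to specify such a recursive ``centred trace''), and is designed so that at least one iteration level detects $i^*$ on the $x$-side but not on the $y$-side, giving $\phi_x\neq\phi_y$ and the desired contradiction. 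Verifying locality, namely that this trace is determined by $B_r(v)$ alone and hence is legitimate data for $v$, will be the main technical task.
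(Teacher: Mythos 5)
The paper does not actually prove this theorem: it is quoted from \cite{NOdM12}, and the paper's own (much better) bounds, Theorems~\ref{thm10} and~\ref{thm11}, are obtained by a completely different route via generalised colouring numbers. So your attempt can only be compared with the proof in \cite{NOdM12}, which works with the tree-depth decompositions of the subgraphs induced by unions of colour classes and the fact that every path inside such a subgraph must pass through a common ancestor in the associated rooted forest of bounded height; it does not localise the information to balls of radius $\lfloor p/2\rfloor$ as you do.

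More importantly, your argument has a genuine gap, and it is exactly the one you flag yourself. Everything up to producing a colour $i^*$ used exactly once on $V(P)$ is fine: a shortest path is induced, and a connected induced subgraph on $p+1$ vertices that receives at least $p$ colours certainly has a uniquely used colour; oddness of $p$ correctly makes $B_r(x)$ and $B_r(y)$ disjoint. But properness requires $\phi_x(S^*)\ne\phi_y(S^*)$, and your definition of $\phi_v$ demonstrably does not deliver this: the colour $i^*$ may occur several times in both $B_x^{S^*}$ and $B_y^{S^*}$ through vertices off $P$, in which case $i^*$ belongs to neither $\phi_x(S^*)$ nor $\phi_y(S^*)$ and no discrepancy is detected. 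The proposed repair --- ``iterating the centred construction'' by recursively peeling uniquely coloured vertices --- is never defined, and no argument is given that some iteration level separates $x$ from $y$. It is far from clear that it can: the peeling processes in $B_x^{S^*}$ and $B_y^{S^*}$ run in two different ambient graphs (the two disjoint balls), a colour occurring more than once in a component is never peeled, and the resulting traces depend on all vertices of the relevant colours in the respective balls, which are essentially unconstrained by the existence of the single path $P$. Since the counting and the set-up are routine and the entire content of the theorem sits in this one step, what you have is a plan with the key lemma missing, not a proof.
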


\noindent
Given a graph~$G$, the \emph{star chromatic number} $\chi_s(G)$ is the
smallest number of colours needed to properly colour~$G$ such that every
two colours induce a star forest (a forest where every component is
isomorphic to a \emph{star $K_{1,m}$}). It is easy to see that a colouring
of a graph is 3-centred if and only if every two colours induce a star
forest. Albertson et al.~\cite{Aletal04} showed that the star chromatic
number of planar graphs is at most~20, and there exist planar graphs with
star chromatic number equal to~10. This means that the best upper bound
known for $\chi(\edp{G}{3})$ for planar graphs $G$ given by
Theorem~\ref{realprev} is $5\cdot2^{20,971,522}$, while the best possible
upper bound for planar graphs that can be found using that theorem directly
is $5\cdot2^{10,241}$.

An alternative bound can be obtained from Theorem~\ref{realprev} using the
following result.

\begin{theo}[Zhu~\cite{Z09}]\mbox{}\\*
  Every graph $G$ has a $p$-centred colouring that uses at most
  $\wcol_{2^{p-2}}(G)$ colours.
\end{theo}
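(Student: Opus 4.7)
The plan is to extract an ordering $L$ of $V(G)$ witnessing $\wcol_{2^{p-2}}(G) \le q$, i.e.\ $|Q_{L,2^{p-2}}(v)| \le q-1$ for every $v$, and to define a colouring $c : V(G) \to \{1,\dots,q\}$ greedily along $L$: process the vertices in $L$-order and assign to each $v$ the least positive integer not appearing on $Q_{L,2^{p-2}}(v)$. This clearly uses at most $q$ colours; it is proper, because any $L$-earlier neighbour $u$ of $v$ lies in $Q_{L,1}(v) \subseteq Q_{L,2^{p-2}}(v)$. The remaining task is to verify that $c$ is $p$-centred, which I would do by induction on $p$.

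The base case $p = 2$ is immediate, since $2^{p-2} = 1$, the definition of a $2$-centred colouring coincides with that of a proper colouring, and the greedy scheme uses at most $\wcol_1(G) = \col(G)$ colours. For the inductive step, let $H$ be a connected induced subgraph of $G$ and let $v$ be the $L$-smallest vertex of $H$. The decisive observation is that if $u \in V(H)$ is joined to $v$ by a path $P$ inside $H$ of length at most $2^{p-2}$, then every internal vertex of $P$ is $L$-above $v$ (as $v$ is the $L$-minimum of $H$), so $v \in Q_{L,2^{p-2}}(u)$ and therefore $c(u) \neq c(v)$. Consequently, if the $H$-distance from $v$ to every other vertex of $H$ is at most $2^{p-2}$, then $c(v)$ appears exactly once in $H$ and we are done.

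If instead some vertex of $H$ lies at $H$-distance greater than $2^{p-2}$ from $v$, I would pass to an appropriately chosen connected induced subgraph $H'$ of $H$ lying ``beyond a shell of radius $2^{p-3}$ around $v$''. The restriction of $L$ to $H'$ then witnesses, in the sense needed by the inductive statement, a reachability property at the halved radius $2^{p-3} = 2^{(p-1)-2}$. By the induction hypothesis applied to $H'$ with the parameter $p-1$, either $H'$ uses at least $p-1$ colours and together with $c(v)$ yields $p$ distinct colours in $H$, or $H'$ contains a vertex $w$ whose colour occurs exactly once in $H'$. In the latter case, the doubling relation $2^{p-2} = 2 \cdot 2^{p-3}$ ensures that no vertex of the removed shell can repeat $c(w)$ inside $H$, so $w$ remains uniquely coloured in $H$ as well.

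The main obstacle will be this halving step: one must choose $H'$ so that it is connected, that the weak $2^{p-3}$-reachability information inherited from $L$ is genuinely usable inside $H'$, and that a uniquely coloured vertex in $H'$ does not have its colour duplicated by a vertex of $H \setminus H'$. It is precisely this last point which forces the exponent $2^{p-2}$ rather than something smaller, since the radius parameter must be halved at each of the $p-2$ inductive steps before reaching the base case at radius $1$.
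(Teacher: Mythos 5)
First, a remark on the comparison you asked for: the paper does not prove this statement at all --- it is quoted from Zhu~\cite{Z09} and used as a black box --- so there is no in-paper proof to measure your argument against, and I can only assess it on its own terms. Your setup is fine: the greedy colouring along $L$ that avoids the colours of $Q_{L,2^{p-2}}(v)$, the base case $p=2$, and the observation that the $L$-minimum $v$ of a connected induced subgraph $H$ gets a colour different from that of every vertex within $H$-distance $2^{p-2}$ of $v$ are all correct, and the last of these is indeed the standard opening move.

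The halving step, however, is a genuine gap rather than a routine verification. If $H'$ is (a component of) the part of $H$ beyond the ball $N_H^{2^{p-3}}[v]$, then a vertex $w$ whose colour is unique in $H'$ may lie at arbitrarily large $H$-distance from a vertex $z$ of the removed shell; in that case neither $w\in Q_{L,2^{p-2}}(z)$ nor $z\in Q_{L,2^{p-2}}(w)$ is forced, so nothing in the construction prevents $c(z)=c(w)$, and your claim that ``no vertex of the removed shell can repeat $c(w)$'' fails. In addition, $H-N_H^{2^{p-3}}[v]$ is generally disconnected and a second component can also reuse $c(w)$; and the statement you induct on must be the strengthened one about \emph{arbitrary} colourings avoiding $Q_{L,2^{p-3}}$-conflicts (which the restriction of $c$ to $H'$ does satisfy), not the theorem itself --- you gesture at this but never fix the inductive statement. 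The doubling $2^{p-2}=2\cdot 2^{p-3}$ is indeed the source of the exponent, but a workable version of the idea exploits it along a single chain: one produces $p-1$ vertices of $H$ in which consecutive ones are weakly accessible at radii $1,1,2,4,\dots,2^{p-3}$, whose sum is at most $2^{p-2}$, and then uses the composition property of weak accessibility (Lemma~\ref{twoweak} of this paper) to force all $p-1$ colours to be pairwise distinct; a radial decomposition of $H$ around $v$ does not supply these pairwise constraints. As it stands, you have identified the right colouring but not proved that it is $p$-centred.
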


\begin{cor}\label{first}\mbox{}\\*
  Let $p$ be an odd positive integer and~$G$ a graph. Setting
  $W=\wcol_{2^{p-2}}(G)$ we have $\chi(\edp{G}{p})\le W2^{W2^W}$.
\end{cor}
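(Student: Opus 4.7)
The plan is to obtain the corollary by directly chaining the two results quoted immediately above it in the subsection, with no additional work required. Concretely, given a graph $G$ and an odd positive integer $p$, I would first invoke Zhu's theorem with parameter $p$ to produce a $p$-centred colouring of~$G$ that uses at most $W=\wcol_{2^{p-2}}(G)$ colours. Having such a colouring in hand, I would then apply Theorem~\ref{realprev} with $N=W$ to conclude that $\chi(\edp{G}{p})\le W\,2^{W\,2^W}$.

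The only point to verify is that these two theorems compose cleanly. Theorem~\ref{realprev} only requires as hypothesis the existence of a $p$-centred colouring using \emph{at most} $N$ colours, and its conclusion is monotone in~$N$ in the sense that a colouring with fewer colours yields the same bound with the same~$N$. Thus plugging $N:=W$ into $N\,2^{N\,2^N}$ gives exactly the claimed bound, regardless of whether Zhu's colouring actually uses all $W$ colours or fewer.

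I do not expect any genuine obstacle here: the corollary is a one-line composition of two black-box statements, and its purpose in the paper is to record an explicit upper bound from which one can compare to the new bounds obtained via Theorems~\ref{thm10} and~\ref{thm11}. The only thing a writer should be careful about is stating the parameter $W$ explicitly before writing the final inequality, since the expression $W\,2^{W\,2^W}$ is not readable without first binding~$W$ to $\wcol_{2^{p-2}}(G)$.
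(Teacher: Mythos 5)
Your proposal is correct and is exactly how the paper obtains this corollary: it is stated without a separate proof precisely because it is the immediate composition of Zhu's theorem (giving a $p$-centred colouring with at most $W=\wcol_{2^{p-2}}(G)$ colours) with Theorem~\ref{realprev} applied with $N=W$. Your remark on the monotonicity of the hypothesis in $N$ is the only point needing checking, and it holds.
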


\noindent
More recently, Stavropoulos~\cite{S16} improved Corollary~\ref{first}.

\begin{theo}[Stavropoulos~\cite{S16}]\label{konstant}\mbox{}\\*
  For every odd integer $p\ge3$ and every graph $G$ we have
  $\chi(\edp{G}{p})\le \wcol_{2p-3}(G)2^{\wcol_{2p-3}(G)}$.
\end{theo}

\noindent
The best upper bound known for the weak colouring numbers of planar graphs
is given by the following result.

\begin{theo}[Van den Heuvel et
  al.~\cite{vdHetal16}]\label{binomcol}\mbox{}\\*
  For every positive integer~$k$ and planar graph~$G$ we have
  $\displaystyle\wcol_{k}(G)\le \binom{k+2}{2}\cdot(2k+1)$.
\end{theo}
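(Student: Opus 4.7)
The plan is to combine a BFS layering with an intra-layer ordering tailored to the planar structure of $G$. Root a BFS at an arbitrary vertex $r$ of $G$, obtaining layers $L_0 = \{r\}, L_1, L_2, \ldots$, where $L_i$ consists of the vertices at distance $i$ from $r$. The linear ordering $L$ of $V(G)$ will be lexicographic: if $v \in L_i$ and $w \in L_j$ with $i < j$, put $v <_L w$; ties within a layer will be broken using an auxiliary ordering defined below, based on a decomposition of the planar structure.

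The first key observation is that if $x$ is weakly $k$-accessible from $y \in L_i$, then $x \in L_j$ with $i - k \le j \le i$ (since a path of length at most $k$ changes BFS-depth by at most $k$, and $x <_L y$ forces $j \le i$). Hence $Q_{L,k}(y) \subseteq L_{i-k} \cup L_{i-k+1} \cup \cdots \cup L_i$, and the problem reduces to controlling $|Q_{L,k}(y) \cap L_{i-j}|$ for each $j = 0,1,\ldots,k$. Here I would invoke a structural fact about planar BFS decompositions: the subgraph induced on any union of at most $2k+1$ consecutive layers has a favourable tree-like decomposition (in the spirit of Robertson--Seymour--Thomas or Eppstein's bounded local treewidth), so that non-tree edges between layers are constrained by a planar-topological order. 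The intra-layer ordering can then be chosen so that a path witnessing weak $k$-access from $y$ into $L_{i-j}$ decomposes into a segment along the BFS tree from $y$ towards $L_{i-j}$, plus a bounded number of ``sideways'' steps within a single layer.

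For the counting, I expect the bound to arise as follows. For each $j \in \{0, 1, \ldots, k\}$, the BFS path from $y$ up to $L_{i-j}$ provides a ``spine'' of length $j$. A weak $k$-access path then has a budget of $k - j$ remaining edges that may be spent on deviations. The factor $2k+1$ should come from the observation that, in each layer along the spine, at most $2k+1$ vertices can be accessed by such a short deviation (a classical planarity/outerplanarity-style argument in a single layer). The factor $\binom{k+2}{2} = \frac{(k+1)(k+2)}{2}$ should emerge as a summation over pairs $(j, k-j)$ controlling how the deviation budget is distributed across the $k+1$ candidate layers, giving $\sum_{j=0}^{k} (j+1) = \binom{k+2}{2}$ after telescoping.

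The main obstacle will be designing the intra-layer ordering so that (i) the planar structure of $G$ restricted to any window $L_{i-k} \cup \cdots \cup L_i$ is genuinely exploited, not just its being planar, and (ii) the per-layer count stays additive over layers rather than multiplicative in $k$, which is what prevents the bound from blowing up to $(2k+1)^{k+1}$. Concretely, this likely requires choosing the intra-layer ordering from a tree decomposition of the layer-window subgraph, and then carefully classifying the witnessing paths into a small number of topological types so that each type contributes a clean linear-in-$k$ count.
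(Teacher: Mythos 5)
This statement is quoted from Van den Heuvel et al.\ \cite{vdHetal16}; the present paper does not reprove it, but Section~\ref{sec3} reproduces the relevant machinery (Lemmas~\ref{tw}, \ref{width}, \ref{flatbound}, \ref{maxp}, \ref{short}) to prove the analogous bound for $\dcol_k$. Measured against that, your proposal is a plan rather than a proof, and the two steps you defer are precisely the ones that carry all the difficulty. First, the structural tool you reach for --- bounded local tree-width of a window of $2k+1$ consecutive BFS layers --- is the wrong one: such a window has tree-width $\Theta(k)$, and the only general mechanism for converting a tree-width bound $t$ into a weak colouring number bound is Lemma~\ref{tw}, which gives $\binom{k+t}{t}$; with $t=\Theta(k)$ this is exponential in $k$, nowhere near $\binom{k+2}{2}(2k+1)$. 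Second, your claim that ``in each layer along the spine, at most $2k+1$ vertices can be accessed'' is false for any ordering-independent reason: if the BFS root has huge degree, a single layer contains arbitrarily many vertices at distance $2$ from $y$, so the $2k+1$ cannot come from distance considerations within a layer --- it has to be enforced by the intra-layer ordering, which you never define. Relatedly, your derivation of the factor $\binom{k+2}{2}$ as $\sum_{j=0}^k(j+1)$ over deviation budgets is numerology: the identity holds, but no argument is attached to it.

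The actual proof runs along a different decomposition. A maximal planar graph admits a connected decomposition $\mathcal{H}=(H_1,\dots,H_\ell)$ into \emph{shortest paths}, each $H_i$ a geodesic in $G-\bigcup_{j<i}V(H_j)$, such that the decomposition has width at most $2$ (Lemma~\ref{maxp}); the vertices are ordered by part index, arbitrarily within parts. Contracting the parts yields a graph of tree-width at most $2$ (Lemma~\ref{width}), so by Lemma~\ref{tw} with $t=2$ at most $\binom{k+2}{2}-1$ parts are weakly $k$-accessible from the part containing $y$; including that part itself gives $\binom{k+2}{2}$ parts that can contain vertices of $Q_{L,k}(y)$. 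Each part is a geodesic in the graph in which the witnessing paths live, so by Lemma~\ref{short} it meets $N^k[y]$, and hence $Q_{L,k}(y)$, in at most $2k+1$ vertices. So the $2k+1$ is a per-geodesic count, not a per-layer count, and the $\binom{k+2}{2}$ counts weakly reachable parts in a width-$2$ contraction, not a distribution of a path budget over layers. Your BFS layering is not recoverable as a special case of this: the decomposition into geodesics cuts across BFS layers, and replacing geodesics by layers destroys both bounds.
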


\noindent
So $\wcol_2(G)\le30$ and $\wcol_3(G)\le70$ for planar $G$, which, when
combined with Corollary~\ref{first}, unfortunately gives a worse bound for
$\chi(\edp{G}{3})$ than the one using the star chromatic number obtained
earlier. Theorems~\ref{konstant} and~\ref{binomcol} together give
$\chi(\edp{G}{3})\le 70\cdot2^{70}$ for every planar graph~$G$, while
combining Theorems~\ref{thm10}\,(a) and~\ref{binomcol} already gives the
significantly better upper bound $\chi(\edp{G}{3})\le231$. In
Section~\ref{sec3} we will show that this bound can be lowered further to
105.

\medskip
The remainder of this paper is organised as follows. In the next section we
prove our main results, Theorems~\ref{thm10} and~\ref{thm11}. We use the
results from that section in Section~\ref{sec3} to find explicit upper
bounds for the chromatic number of exact distance graphs for some specific
classes of graphs, including graphs with bounded genus, graphs with bounded
tree-width, and graphs without a specified complete minor. In
Section~\ref{sec4} we describe the graph promised in
Theorem~\ref{thm5}\,(b). We close with a number of open problems and
directions for further study.

\section{Proofs of the main results}\label{proofs}

We need a few more definitions. For a positive integer $k$, we denote
$[k]=\{1,2,\dots,k\}$. For a vertex $v\in V$, we will denote by $N^k(y)$
the \emph{$k$-th neighbourhood} of $y$, that is, the set of vertices
different from~$v$ with distance at most~$k$ from~$v$; and we set
$N^k[v]=N^k(v)\cup\{v\}$. As is standard, we write~$N(v)$ for $N^1(v)$.

\subsection{Proof of Theorem~\ref{thm10}}\label{proof10}

For later use, we actually prove a slightly stronger result, which involves
a more technical variant of the generalised colouring numbers. Let
$G=(V,E)$ be a graph, $L$ a linear ordering of~$V$, and~$k$ a positive
integer. For a vertex $y\in V$, let $D_{L,k}(y)$ be the set of vertices~$x$
such that there is an $xy$-path $P_x=z_0,\dots,z_s$, with $x=z_0$, $y=z_s$,
of length $s\le k$, such that~$x$ is the minimum vertex in $P_x$ with
respect to~$L$, and such that $y\le_Lz_i$ for
$\bigl\lfloor\frac12k\bigr\rfloor+1\le i\le s$. We define the
\emph{distance-$k$-colouring number $\dcol_k(G)$} of a graph $G$ as
follows:
\[\dcol_k(G)=1+ \min_L\max_{y\in V}|D_{L,k}(y)|.\]
Since $R_{L,k}(y)\subseteq D_{L,k}(y)\subseteq Q_{L,k}(y)$ for every
ordering~$L$, distance~$k$ and vertex~$y$, we obtain
$\col_k(G)\le \dcol_k(G)\le \wcol_k(G)$. On the other hand, we also have
$Q_{L,\lfloor k/2\rfloor+1}(y)\subseteq D_{L,k}(y)$, which implies that
$\wcol_{\lfloor k/2\rfloor+1}(G)\le \dcol_k(G)$.

We will prove the following sharpening of Theorem~\ref{thm10}.

\begin{theo}\label{thm10d}\mbox{}

  \qitem{(a)}For every odd positive integer~$p$ and every graph~$G$ we have
  $\chi(\edp{G}{p})\le\dcol_{2p-1}(G)$.

  \smallskip
  \qitem{(b)}For every even positive integer~$p$ and every graph~$G$ we
  have $\chi(\edp{G}{p})\le\dcol_{2p}(G)\cdot\Delta(G)$.
\end{theo}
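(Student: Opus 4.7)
My plan is to fix, in part~(a), an ordering $L$ of $V(G)$ realising $\dcol_{2p-1}(G)=k$, so that $|D_{L,2p-1}(y)|\le k-1$ for every $y$; in part~(b) I would analogously take an ordering that realises $\dcol_{2p}(G)=K$. In both cases the colouring of $\edp{G}{p}$ is obtained greedily along $L$, and the whole work is to bound the back-degree of every vertex in $\edp{G}{p}$.

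For part~(a), the heart of the proof is the following injection lemma: the set of back-neighbours of $y$ in $\edp{G}{p}$, namely $\{x<_L y:d_G(x,y)=p\}$, injects into $D_{L,2p-1}(y)$. This gives back-degree at most $|D_{L,2p-1}(y)|\le k-1$ at every $y$, so the greedy colouring along $L$ uses at most $k=\dcol_{2p-1}(G)$ colours. To build an injection $\iota$, for each back-neighbour $x$ I would fix a shortest $x,y$-path $P_x=u_0,u_1,\dots,u_p$ with $u_0=x$, $u_p=y$, and let $m_x$ be its $L$-minimum. The sub-path of $P_x$ from $m_x$ to $y$ has length at most $p\le 2p-1$, has $m_x$ as its minimum, and trivially satisfies the second-half condition of $D_{L,2p-1}(y)$ (indices $\ge p$ on this sub-path are either absent or equal to $y$), so $m_x\in D_{L,2p-1}(y)$. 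I set $\iota(x)=x$ when $m_x=x$ (in which case $x\in D_{L,2p-1}(y)$), and $\iota(x)=m_x$ otherwise.

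Injectivity has two cases. If a self-image $\iota(x_1)=x_1$ collides with a min-image $\iota(x_2)=m_{x_2}=x_1$ for $x_1\ne x_2$, then $x_1$ lies on the shortest path $P_{x_2}$, so $d(x_2,x_1)+d(x_1,y)=d(x_2,y)=p$; since $d(x_1,y)=p$, this forces $x_1=x_2$, a contradiction. The remaining case of two min-images with $m_{x_1}=m_{x_2}=m$ and $x_1\ne x_2$ is the main obstacle. I would resolve it by choosing the paths $P_x$ canonically (for instance from a fixed BFS tree rooted at $y$) and using the odd parity of $p$ to produce distinct images, or alternatively by checking Hall's condition $|\{v\in D_{L,2p-1}(y):v\le_L x_i\}|\ge i$ for the back-neighbours $x_1<_L x_2<_L\cdots$ and invoking Hall's theorem to obtain the injection positionally.

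For part~(b), I would reduce to part~(a) at the cost of a factor $\Delta(G)$. For any edge $xy$ of $\edp{G}{p}$ with $x<_L y$ and $p$ even, the penultimate vertex of any shortest $x,y$-path is a neighbour $u\in N(y)$ with $d_G(x,u)=p-1$ (odd), and the rest of the path allows the same construction as in part~(a) inside $D_{L,2p}(y)$. I then map each back-neighbour $x$ of $y$ to the pair $(u,v)\in N(y)\times D_{L,2p}(y)$ where $u=u_{p-1}$ and $v$ is obtained as in part~(a). Since $|N(y)|\le\Delta(G)$ and $|D_{L,2p}(y)|\le K-1$, the back-degree at $y$ in $\edp{G}{p}$ is at most $\Delta(G)\cdot K$, so greedy along $L$ yields $\chi(\edp{G}{p})\le\dcol_{2p}(G)\cdot\Delta(G)$. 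The hardest step is the min-collision case in part~(a); it is exactly where the odd parity of $p$ is used, and it is what allows the linear rather than exponential dependence on $\dcol_{2p-1}(G)$.
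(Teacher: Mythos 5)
Your overall strategy for part~(a) --- greedily colouring $\edp{G}{p}$ along $L$ and bounding the back-degree of each vertex of $\edp{G}{p}$ by $|D_{L,2p-1}(y)|$ --- cannot work, because the ``injection lemma'' at its heart is false. The obstruction is exactly the density phenomenon the paper exhibits in its introduction: for the planar graph $L_{1,k}$ ($K_4$ with $k$ pendant vertices attached to each branch vertex), the graph $\edp{L_{1,k}}{3}$ contains the complete $4$-partite graph $K_{k,k,k,k}$ on the $4k$ pendant vertices, which has minimum degree $3k$ and hence degeneracy at least $3k$. Consequently, in \emph{every} linear ordering of $V(L_{1,k})$ --- in particular in the ordering you fix to witness $\dcol_{5}$ --- some vertex $y$ has at least $3k$ back-neighbours in $\edp{L_{1,k}}{3}$, while $|D_{L,5}(y)|\le\dcol_5(L_{1,k})-1\le 142$ by Theorem~\ref{dcolplan}\,(a). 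For $k$ large no injection from the back-neighbourhood of $y$ in $\edp{G}{3}$ into $D_{L,5}(y)$ can exist, so neither a canonical (BFS-tree) choice of the paths $P_x$ nor Hall's theorem can rescue the argument: the ``min-collision case'' you flag as the main obstacle is not a technical difficulty but a genuine impossibility. More generally, any proof that bounds $\chi(\edp{G}{p})$ by the degeneracy of $\edp{G}{p}$ is doomed, since that degeneracy is unbounded already for planar $G$ and $p=3$. Part~(b) of your proposal inherits the same flaw, as it reduces to the machinery of part~(a).

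The missing idea is that the paper does \emph{not} colour $\edp{G}{p}$ greedily. It first builds an auxiliary colouring $a$ greedily along $L$ with respect to the sets $D_{L,2p-1}(\cdot)$ (so $a$ uses at most $\dcol_{2p-1}(G)$ colours), and then assigns to each $y$ the colour $h(y)=a(\mu(y))$, where $\mu(y)$ is the $L$-minimum vertex of $N^{\lfloor p/2\rfloor}[y]$; many vertices deliberately share a colour. Properness uses the oddness of $p$ in an essential structural way: if $d_G(u,v)=p$ then $N^{\lfloor p/2\rfloor}[u]$ and $N^{\lfloor p/2\rfloor}[v]$ are disjoint, so $\mu(u)\ne\mu(v)$, and concatenating a path from $\mu(u)$ to the midpoint of a $u,v$-path of length $p$ inside the first ball, the middle edge, and a path to $\mu(v)$ inside the second ball yields a path of length at most $2p-1$ witnessing $\mu(u)\in D_{L,2p-1}(\mu(v))$ (when $\mu(u)<_L\mu(v)$), whence $a(\mu(u))\ne a(\mu(v))$. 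For even $p$ the two balls $N^{p/2}[u]$ and $N^{p/2}[v]$ may intersect, so $\mu(u)=\mu(v)$ is possible; the paper handles this by appending to the colour the label of a neighbour of $\mu(y)$ on a shortest $\mu(y),y$-path, which costs the factor $\Delta(G)$. This ``colour by representative'' step is what you would need to adopt to obtain a correct proof.
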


\begin{proof}
  (a)\quad For an odd positive integer~$p$ and graph $G=(V,E)$, set
  $q=\dcol_{2p-1}(G)$ and let~$L$ be an ordering of $V$ that witnesses
  $\max\limits_{y\in V}|D_{L,2p-1}(y)|=q-1$. Moving along the ordering~$L$
  we assign to each vertex $y\in V$ a colour $a(y)\in[q]$ that is different
  from~$a(x)$ for all $x\in D_{L,2p-1}(y)$. Next, define $\mu(y)$ as the
  minimum vertex with respect to~$L$ of the vertices in
  $N^{\lfloor p/2\rfloor}[y]$, and define $h:V\to[q]$ by $h(y)=a(\mu(y))$.
  We claim that $h$ is a $q$-colouring of~$\edp{G}{p}$.

  Consider an edge $uv\in E(\edp{G}{p})$. So there exists a path
  $P=z_0,z_1,\dots,z_p$ with $z_0=u$ and $z_p=v$. Clearly,
  $N^{\lfloor p/2\rfloor}[u]\cap N^{\lfloor p/2 \rfloor}[v]=\varnothing$,
  and hence $\mu(x)\ne\mu(y)$. Without loss of generality, assume
  $\mu(u)<_L\mu(v)$. Since
  $\mu(u),z_{\lfloor p/2\rfloor}\in N^{\lfloor p/2\rfloor}[u]$, there
  exists a path~$S_1$ between $\mu(u)$ and~$z_{\lfloor p/2\rfloor}$ of
  length at most $2\bigl\lfloor\frac12p\bigr\rfloor=p-1$ such that
  $V(S_1)\subseteq N^{\lfloor p/2\rfloor}[u]$. Similarly, there exists a
  path~$S_2$ between $z_{\lfloor p/2\rfloor+1}$ and $\mu(v)$ of length at
  most $p-1$ such that $V(S_2)\subseteq N^{\lfloor p/2\rfloor}[v]$. Since
  $N^{\lfloor p/2\rfloor}[u]\cap N^{\lfloor p/2 \rfloor}[v]=\varnothing$
  and $z_{\lfloor p/2\rfloor}z_{\lfloor p/2\rfloor+1}\in E$, we can combine
  these paths to a path~$S$ between $\mu(u)$ and $\mu(v)$ of length at most
  $2p-1$.

  Note that if we write $S=w_0,w_1,\ldots,w_t$ with $w_0=\mu(u)$ and
  $w_t=\mu(v)$, then the vertices~$w_i$ for
  $\bigl\lfloor\frac12k\bigr\rfloor+1\le i\le t$ all lie on~$S_2$, hence
  are in $N^{\lfloor p/2\rfloor}[v]$. Since $\mu(v)$ is the minimum vertex
  in $N^{\lfloor p/2\rfloor}[v]$, we have $\mu(v)\le_Lw_i$ for those~$w_i$.
  Thus~$S$ witnesses that $\mu(u)\in D_{L,2p-1}(\mu(v))$. We conclude that
  $h(u)=a(\mu(u))\ne a(\mu(v))=h(v)$, as required.

  \medskip\noindent
  (b)\quad For an even positive integer~$p$ and graph $G=(V,E)$, set
  $q=\dcol_{2p}(G)$ and let~$L$ be an ordering of $V$ that witnesses
  $\max\limits_{y\in V}|D_{L,2p}(y)|=q-1$. Moving along the ordering~$L$ we
  assign to each vertex $y\in V$ a colour $a(y)\in[q]$ that is different
  from~$a(x)$ for all $x\in D_{L,2p}(y)$. Additionally, for each
  vertex~$y$, choose an injective function $c_y:N(y)\to[\Delta(G)]$.

  Next, define $\mu(y)$ as the minimum vertex with respect to~$L$ of the
  vertices in $N^{p/2}[y]$. We also choose an arbitrary vertex in
  $N(\mu(y))\cap N^{p/2-1}(y)$; call it $\beta(y)$. To each vertex $y$ we
  assign as its colour the pair $(a(\mu(y)),c_{\mu(y)}(\beta(y))$. It is
  clear that this colouring uses at most $q\cdot \Delta(G)$ colours, and we
  claim that it is a proper colouring of $\edp{G}{p}$.

  Consider an edge $uv\in E(\edp{G}{p})$. First suppose that
  $\mu(u)\ne\mu(v)$. Then we can follow the proof of part~(a) to conclude
  that $a(\mu(u))\ne a(\mu(v))$, and hence the colours of~$u$ and~$v$
  differ in the first coordinate.

  So we are left with the case $\mu(u)=\mu(v)$. Since $d_G(u,v)=p$, we have
  that $\mu(v)\in N^{p/2}(u)\cap N^{p/2}(v)$, while
  $N^{p/2-1}(u)\cap N^{p/2-1}(v)=\varnothing$. This means that
  $\beta(u)\ne\beta(v)$. Together with the fact that
  $\beta(u),\beta(v)\in N(\mu(v))$, we obtain that
  $c_{\mu(v)}(\beta(u))\ne c_{\mu(v)}(\beta(v))$. This gives that the
  colours of~$u$ and~$v$ differ in the second coordinate, which completes
  the proof.
\end{proof}

\subsection{Proof of Theorem~\ref{thm11}}\label{proof11}

In the proof of Theorem~\ref{thm11} we use the following lemmas.

\begin{lem}\label{twoweak}\mbox{}\\*
  Let $G=(V,E)$ be a graph and~$L$ a linear ordering of~$V$. Let $x,y,z$ be
  distinct vertices in~$G$. If~$x$ is weakly $k$-accessible from~$y$,
  and~$z$ is weakly $\ell$-accessible from~$y$, then~$x$ is weakly
  $(k+\ell)$-accessible from~$z$ or~$z$ is weakly $(k+\ell)$-accessible
  from~$x$.
\end{lem}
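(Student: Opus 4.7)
The plan is to concatenate the two witnessing paths at their common endpoint $y$ and extract a single path that witnesses the required access. Since $x \ne z$, they are comparable in $L$; by the symmetry of the statement in the hypotheses on $x$ and $z$, I may assume $x <_L z$, so the goal becomes to show that $x$ is weakly $(k+\ell)$-accessible from $z$. Let $P_1$ be a witnessing $xy$-path of length at most $k$ whose internal vertices all satisfy $v >_L x$, and let $P_2$ be a witnessing $zy$-path of length at most $\ell$ whose internal vertices all satisfy $v >_L z$.

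First I would concatenate $P_2$ with the reverse of $P_1$ to form a $zx$-walk $W$ (through $y$) of length at most $k+\ell$. The main obstacle is that $W$ need not be a simple path: the paths $P_1$ and $P_2$ may share vertices other than $y$, and a priori $x$ could appear on $P_2$ or $z$ could appear on $P_1$. To handle this, I would extract a $zx$-path $P \subseteq W$ by the usual cycle-removal procedure, so that $V(P) \subseteq V(W) = V(P_1) \cup V(P_2)$ and $P$ still has length at most $k+\ell$.

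It then remains to verify that every internal vertex $v$ of $P$ satisfies $v >_L x$. Such a $v$ lies in $V(P_1) \cup V(P_2)$ and is distinct from both $x$ and $z$, so one of three cases applies: if $v = y$, then $v >_L x$ since $x <_L y$; if $v$ is an internal vertex of $P_1$, then by the choice of $P_1$ we have $v >_L x$ directly; and if $v$ is an internal vertex of $P_2$, then $v >_L z >_L x$. In every case $v >_L x$, so $P$ witnesses that $x$ is weakly $(k+\ell)$-accessible from $z$, completing the proof in this case, and hence, by symmetry, in general.
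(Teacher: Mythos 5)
Your proof is correct and follows essentially the same route as the paper's: concatenate the two witnessing paths at $y$ and observe that, assuming $x<_Lz$, every internal vertex of the resulting $x,z$-connection exceeds $x$ in $L$. You are slightly more careful than the paper in extracting a simple path from the concatenated walk and in checking the case $v=y$ (which uses $x<_Ly$ from the definition of weak accessibility), but the argument is the same.
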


\begin{proof}
  Since~$x$ is weakly $k$-accessible from~$y$, there is a path
  $x,v_1,v_2,\ldots,v_{r-1},y$ of length $r\le k$ for which all internal
  vertices~$v_i$ satisfy $x<_Lv_i$. Also, since~$z$ is weakly
  $\ell$-accessible from~$y$, there is a path $y,u_1,u_2,\ldots,u_{s-1},z$
  of length $s\le\ell$ for which all internal vertices~$u_j$ satisfy
  $z<_Lu_j$. Then, if $x<_Lz$, there is an $xz$-path of length at most
  $k+\ell$ with all internal vertices greater than~$x$ in~$L$; hence, $x$
  is weakly $(k+\ell)$-accessible from~$z$. Similarly, if $z<_Lx$, then~$z$
  is weakly $(k+\ell)$-accessible from~$x$.
\end{proof}

\begin{lem}\label{lem2}\mbox{}\\*
  Let $p$ be a positive integer and~$G$ a graph with odd girth at least
  $p+1$

  \smallskip
  \qitem{(a)} Every closed walk of odd length has length at least $p+1$.

  \smallskip
  \qitem{(b)} Let $x,y$ be different vertices and~$W$ a walk between~$x$
  and~$y$ of length $r\le p$. Then there exists a path between~$x$ and~$y$
  of length $s\le r$ such that~$s$ and~$r$ have the same parity.
\end{lem}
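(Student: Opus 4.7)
My approach is to handle the two parts sequentially, with part~(a) serving as the essential tool for part~(b).

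For part~(a), the plan is to prove the standard fact that every closed walk of odd length contains an odd cycle, then combine this with the odd girth hypothesis. I would induct on the length of a closed odd walk $W = v_0 v_1 \dots v_k$, $v_0 = v_k$. If no vertex occurs twice among $v_0,\dots,v_{k-1}$, then $W$ is itself an odd cycle, so by the odd girth assumption $k \ge p+1$. Otherwise, some repetition $v_i = v_j$ with $0 \le i < j \le k-1$ lets us split $W$ into two shorter closed sub-walks of lengths $j-i$ and $k-(j-i)$; since $k$ is odd, exactly one of these lengths is odd, and by the inductive hypothesis it is at least $p+1$. Hence $k \ge p+1$.

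For part~(b), the key observation is that a walk $W$ from $x$ to $y$ of length $r \le p$ cannot contain any closed sub-walk of odd length: such a sub-walk would itself be a closed odd walk of length at most $r \le p < p+1$, contradicting part~(a). The plan is then to iteratively shorten $W$ by the standard walk-to-path operation: while $W$ is not a path, find a vertex $v$ occurring at two positions $i < j$ along $W$ and delete the closed sub-walk from position $i$ to position $j$, producing an $x,y$-walk $W'$ with $|W'| = |W| - (j-i) < |W|$. The deleted portion is a closed walk of length $j-i \le |W| \le p$, so by the preceding observation its length is even. Thus $|W'|$ and $|W|$ have the same parity. Iterating terminates in a path $P$ of length $s \le r$ with $s \equiv r \pmod 2$.

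The only place the odd girth hypothesis is actually used is in part~(a), where it reduces the problem to the classical "odd closed walk contains an odd cycle" fact; the rest of the argument is purely combinatorial walk manipulation with parity bookkeeping. I expect the main subtlety to be phrasing the induction in part~(a) cleanly, in particular making sure the split at a repeated vertex really does yield two strictly shorter closed walks whose lengths sum to that of $W$.
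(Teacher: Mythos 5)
Your proposal is correct and follows essentially the same route as the paper: part~(a) reduces to the classical fact that an odd closed walk contains an odd cycle (which the paper simply cites as straightforward and you prove by induction on length), and part~(b) iteratively deletes closed sub-walks, each of which must have even length by part~(a), preserving parity until a path remains. The only cosmetic difference is your termination measure (total length) versus the paper's (number of repeated vertices, via choosing first and last occurrences); both work.
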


\begin{proof}
  The proof of (a) is straightforward, since a closed walk of odd length
  contains a cycle of odd length. For~(b), let $W=w_0,\ldots,w_r$, with
  $x=w_0$ and $y=w_r$. If $W$ itself is not a path, then some vertex~$z$
  appears more than once in~$W$. The part of~$W$ between the first and last
  appearances of~$z$ is a closed walk~$W'$ of length $t\le r$. Using~(a) we
  obtain that~$t$ must be even. Hence, if we remove~$W'$ from~$W$, we get a
  shorter walk between~$x$ and~$y$ of length $r-t\equiv r\pmod2$.
  Additionally, the resulting walk has fewer vertices that appear more than
  once than~$W$ does. Hence, if we do not immediately obtain a path, we can
  repeat this procedure inductively until we obtain an $xy$-path with the
  desired property.
\end{proof}

\begin{proof}[Proof of Theorem~\ref{thm11}]\mbox{}\\*
  For both parts of the theorem we use the same colouring. Let $L$ be an
  ordering of $V$ such that $\max\limits_{y \in V}|Q_{L,p}(y)|=q-1$. We
  first create an auxiliary colouring $a(y)\in[q]$ by moving along the
  ordering~$L$, and assigning to each vertex $y\in V$ a colour $a(y)\in[q]$
  that is different from~$a(x)$ for all $x\in Q_{L,p}(y)$. Next, for a
  vertex $x\in Q_{L,\lfloor p/2\rfloor}(y)$, let $d_y'(x)$ be the minimum
  integer~$k$ such that~$x$ is weakly \mbox{$k$-accessible} from~$y$, and
  set $d_y'(y)=0$.

  Define the function
  $b_y:[q]\rightarrow
  \bigl[\bigl\lfloor\frac12p\bigr\rfloor\bigr]\cup\{-1,0\}$ as follows. For
  a colour $c\in[q]$, let
  \[b_y(c)=\left\{
    \begin{array}{rl}
      d_y'(x),& \text{if there exists an
        $x\in Q_{L,\lfloor p/2\rfloor}(y)\cup\{y\}$ with
        $a(x)=c$};\\[1mm]
      -1,& \text{otherwise}.
    \end{array}\right.\]

  \noindent
  By Lemma~\ref{twoweak} and the definition of $a(x)$, we see that if
  $x\in Q_{L,\lfloor p/2\rfloor}(y)\cup\{y\}$ satisfies $a(x)=c$, then~$x$
  is the only vertex in $Q_{L,\lfloor p/2\rfloor}(y)\cup\{y\}$ with
  colour~$c$. That implies that $b_y$ is well defined.

  The number of possible functions
  $b_y:[q]\rightarrow
  \bigl[\bigl\lfloor\frac12p\bigr\rfloor\bigr]\cup\{-1,0\}$ is
  $\bigl(\bigl\lfloor\frac12p\bigr\rfloor+2\bigr)^q$. We will prove that
  labelling each vertex $y\in V$ with~$b_y$ gives a proper colouring for
  the graphs and situations described in parts~(a) and~(b) of the theorem.
  It is more convenient to do part~(b) first.

  \medskip
  \noindent
  (b)\quad Consider two vertices $u,v$ for which there exists a path of
  length~$p$ between~$u$ and~$v$. Without loss of generality we assume
  $u<_Lv$. If~$u$ is weakly $p$-accessible from~$v$ in~$L$, then we know
  that $a(u)\ne a(v)$, and hence $b_u(a(u))=0\ne b_v(a(u))$.

  So we are left with the case in which~$u$ is not weakly $p$-accessible
  from~$v$ in~$L$. Let~$k$ be the length of the shortest odd-length path
  between~$u$ and~$v$. We obviously have $k\le p$. Because~$u$ is not
  weakly $p$-accessible from~$v$ in~$L$, we also have $k\ne1$, hence
  $k\ge3$. Let $P=z_0,z_1,z_2,\ldots,z_{k-1},z_k$ be a path of length~$k$
  between $u=z_0$ an $v=z_k$. Let~$z_\ell$ be the vertex of~$P$ that is
  minimum with respect to the ordering~$L$. Since $u<_Lv$, we get that
  $z_\ell\ne v$, and, since~$u$ is not weakly $p$-accessible from~$v$, we
  see that $z_\ell\ne u$. Therefore,~$z_\ell$ is weakly $\ell$-accessible
  from~$u$ and weakly $(k-\ell)$-accessible from~$v$.

  First consider the case that $\ell<k-\ell$. Then $\ell<\frac12k$. We want
  to prove that $d'_u(z_\ell)=\ell$. For this, assume that
  $d'_u(z_\ell)=m<\ell$. Hence there is a path~$A$ between~$u$ and~$z_\ell$
  of length~$m$. If~$\ell$ and~$m$ have different parity, then the union
  of~$A$ and the path $z_0,z_1,\ldots,z_\ell$ gives a closed walk of odd
  length $m+\ell<2\ell<k\le p$, which contradicts Lemma~\ref{lem2}\,(a).
  So~$m$ and~$\ell$ have the same parity. Now if we replace in the path~$P$
  the part $z_0,z_1,\ldots,z_\ell$ with~$A$, we get a walk between~$u$
  and~$v$ of length $k-\ell+m<k$, hence with odd length. By
  Lemma~\ref{lem2}\,(b), this walk contains a path between~$u$ and~$v$ of
  odd length at most $k-\ell+m<k$, which contradicts the choice of~$P$.

  So we know that $d'_u(z_\ell)=\ell$. Notice that since there is a path of
  length $k-\ell$ between~$z_\ell$ and~$v$, we have that
  $d'_v(z_\ell)\le k-\ell\le p-\ell$. Since $\ell<\frac12k\le\frac12p$, we
  have that $z_\ell\in Q_{L,\lfloor p/2\rfloor}(u)$, and hence
  $b_u(a(z_\ell))=\ell$.

  Now consider a vertex $x\in Q_{L,\lfloor p/2\rfloor}(v)$ with
  $d_v'(x)=\ell$. We first prove that $x\ne z_\ell$. For suppose this is
  not the case, then there is a path from~$v$ to~$z_\ell$ of length~$\ell$.
  Together with the part of $z_\ell,z_{\ell+1},\ldots,z_k=v$ from the
  path~$P$, this gives a closed walk of length $k\le p$. Since~$k$ is odd,
  this contradicts Lemma~\ref{lem2}\,(a).

  Since $d'_v(x)=\ell$, $d_v'(z_\ell)\le p-\ell$ and $x\ne z_\ell$, by
  Lemma~\ref{twoweak} we get that~$x$ is weakly $p$-accessible
  from~$z_\ell$ or~$z_\ell$ is weakly $p$-accessible from~$x$. This gives
  $a(x)\ne a(z_\ell)$, which implies, by choice of~$x$, that
  $b_v(a(z_\ell))\ne\ell$.

  If $k-\ell<\ell$, we can prove in a similar way that $b_u\ne b_v$, which
  completes the proof of part~(b) of the theorem.

  \medskip
  \noindent
  (a)\quad This time we consider two vertices $u,v$ that have distance~$k$
  in~$G$, for some odd integer $k\le p$. (To prove the statement, it would
  be enough to prove the case $k=p$, but we prefer to give the proof of a
  more general statement.) We can more or less follow the proof of part~(b)
  above, working with a shortest path $P=z_0,z_1,z_2,\ldots,z_{k-1},z_k$
  between $u=z_0$ and $v=z_k$.

  Since~$P$ is a shortest path, we immediately get that
  $d_u'(z_\ell)=d_G(u,z_\ell)=\ell$ and
  $d_v'(z_\ell)=d_G(v,z_\ell)=p-\ell$. This also means that $x\ne z_\ell$,
  since $d_G(v,x)\le d_v'(x)=\ell<p-\ell$. For the remainder, the proofs
  are exactly the same.
\end{proof}

\noindent
The proofs of Theorem~\ref{thm11}\,(a) and~(b) above give results that are
stronger than the statements in the theorem. We already discussed in
Subsection~\ref{ssec1.2} that in fact we prove upper bounds on
$\chi(\edp{G}{1}\cup\edp{G}{3}\cup\cdots\cup\edp{G}{p})$ and
$\chi(\epp{G}{1}\cup\epp{G}{3}\cup\cdots\cup\epp{G}{p})$. Additionally, in
part~(a) we could replace the condition that we add an edge $uv$ to
$\edp{G}{p}$ if $d_G(u,v)=p$, i.e.\ ``there is a shortest path of
length~$p$ between~$u$ and~$v$'', by the weaker condition ``there is a
path~$P$ of length~$p$ between~$u$ and~$v$ such that any shorter path
between those vertices is internally disjoint from~$P$''.

\section{Explicit upper bounds on the chromatic number of exact distance
  graphs}\label{sec3}

In this section we use Theorem~\ref{thm10d}\,(a) to find explicit upper
bounds for the chromatic number of exact distance graphs for certain types
of graphs, including planar graphs, graphs with bounded tree-width, and
graphs without a complete minor. Obtaining these bounds involves finding
upper bounds for the distance-$k$-colouring numbers $\dcol_k(G)$. More
explicitly, we will prove the following results.

\begin{theo}\label{dcolplan}\mbox{}\\*
  Let $k$ be a positive integer.

  \qitem{(a)}For every planar graph $G$ we have
  $\displaystyle\dcol_k(G)\le
  \binom{\lfloor k/2\rfloor+3}{2} \cdot(2k+1)-k$.

  \smallskip
  \qitem{(b)}For every graph $G$ with genus $g$ we have
  $\displaystyle\dcol_k(G)\le
  \Bigl(2g+\binom{\lfloor k/2\rfloor+3}{2}\Bigr)\cdot(2k+1)-k$.
\end{theo}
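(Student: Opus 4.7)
The plan is to adapt the proof of Theorem~\ref{binomcol} from Van den Heuvel et al.~\cite{vdHetal16}, replacing their weak-reachability counting with one tailored to the more restricted set $D_{L,k}(y)$. Their proof fixes a chordal partition of the planar (resp.\ genus-$g$) graph $G$ into ``vertical paths'' respecting a BFS layering, orders the vertices accordingly to produce $L$, and shows that every $x \in Q_{L,k}(y)$ lies on one of a bounded family of ``ancestor-path segments'' of $y$, with at most $\binom{k+2}{2}$ such segments, each contributing at most $2k+1$ relevant vertices (plus an extra $2g$ global segments in the genus-$g$ case). I would use exactly this ordering $L$.

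The new observation is that for any witness path $x = z_0, z_1, \ldots, z_s = y$ of $x \in D_{L,k}(y)$, the midpoint $m = z_{\lfloor k/2 \rfloor}$ is joined to $y$ by the subpath $z_{\lfloor k/2 \rfloor}, \ldots, z_s$ of length at most $\lceil k/2 \rceil$ whose proper internal vertices $z_{\lfloor k/2 \rfloor + 1}, \ldots, z_{s-1}$ all lie at or above $y$ in $L$. Thus in the language of the chordal partition, $m$ is located within $\lceil k/2 \rceil$ BFS-layers of $y$ on some ancestor-path relevant to $y$. Re-running the layer-counting argument of~\cite{vdHetal16} with radius $\lfloor k/2 \rfloor$ in place of $k$, and taking into account that both the above-$y$ and below-$y$ halves of the chordal structure can contribute (this is the source of the factor $2$, with a ``$+1$'' for $y$'s own segment), localises $m$ to at most $2\binom{\lfloor k/2 \rfloor + 2}{2} + 1$ ancestor-path segments; part~(b) gains an extra $2g$ here from the $2g$ additional global segments in the genus-$g$ chordal partition.

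Fixing such a segment containing $m$, the prefix $x = z_0, \ldots, z_{\lfloor k/2 \rfloor} = m$ is a path of length at most $\lfloor k/2 \rfloor$ with $x$ the minimum; combined with $d_G(x,y) \le k$, the ``width-$(2k+1)$'' bound on a single ancestor segment from~\cite{vdHetal16} yields at most $2k+1$ candidate positions for $x$. Multiplying the two counts gives the claimed bound $(2\binom{\lfloor k/2 \rfloor + 2}{2} + 1)(2k+1)$, with the genus-$g$ correction $+2g$ inside the first factor for part~(b). The main obstacle will be the precise derivation of the constant $2\binom{\lfloor k/2 \rfloor + 2}{2} + 1$: the original $\binom{k+2}{2}$ in~\cite{vdHetal16} arises from counting pairs of BFS-layer indices, and one must carefully redo this count when the order-constraint on the witness path applies only to a suffix of length $\lceil k/2 \rceil$, while ensuring no double-counting when $m$'s segment coincides with the segment hosting $x$.
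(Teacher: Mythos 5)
Your overall strategy is the paper's: take the vertical-path (chordal) decomposition of~\cite{vdHetal16} with its induced ordering $L$, exploit the suffix condition in the definition of $D_{L,k}(y)$ to cut the weak-reachability radius from $k$ down to $\bigl\lfloor\frac12k\bigr\rfloor$, pay $2k+1$ per geodesic segment, and peel off $2g$ extra optimal paths for genus $g$. The paper packages this as Lemma~\ref{flatbound} (a bound valid for any connected $f$-flat decomposition of width $t$, proved by applying the tree-width bound of Lemma~\ref{tw} to the contracted graph) combined with Lemmas~\ref{maxp} and~\ref{short}, instead of redoing the BFS-layer count of~\cite{vdHetal16} by hand; that is a difference of packaging, not of substance.

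Two steps of your counting need repair, however. First, your pivot $m=z_{\lfloor k/2\rfloor}$ need not satisfy $m<_Ly$, so the suffix $z_{\lfloor k/2\rfloor},\dots,z_s$ does not in general witness that $m$ is accessible from $y$, and you lose control of which segment $m$ sits on (if $m\ge_Ly$ it could lie on almost any segment above $y$'s). The paper's pivot is instead $z_{\sigma}$, where $\sigma$ is the largest index with $z_\sigma<_Ly$: the definition of $D_{L,k}$ forces $\sigma\le\bigl\lfloor\frac12k\bigr\rfloor$, the suffix shows $z_\sigma$ is \emph{strongly} $k$-accessible from $y$ (so its segment is one of at most $t=2$ segments, not $2\binom{\lfloor k/2\rfloor+2}{2}+1$ of them), and the prefix shows $x$ is weakly $\bigl\lfloor\frac12k\bigr\rfloor$-accessible from $z_\sigma$. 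Second, your factorisation places the ``at most $2k+1$ candidates for $x$'' inside a fixed segment containing the pivot, but $x$ need not lie on the pivot's segment. The correct accounting is: the segment of $x$ is one of at most $2\binom{\lfloor k/2\rfloor+2}{2}+1$ segments (at most $2$ choices for the pivot's segment, times at most $\binom{\lfloor k/2\rfloor+2}{2}$ segments weakly $\bigl\lfloor\frac12k\bigr\rfloor$-reachable from it in the contracted graph, counting that segment itself, plus $y$'s own segment), and each such segment, being a geodesic, meets $N^k[y]$ in at most $2k+1$ vertices. With these corrections your argument coincides with the paper's, including the additional $2g$ segments in part~(b).
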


\begin{theo}\label{dcoltw}\mbox{}\\*
  Let $k$ and $t$ be positive integers. For every graph $G$ with tree-width
  at most $t$ we have
  $\displaystyle\dcol_{k}(G)\le
  \binom{t+\lfloor k/2\rfloor+1}{t}$.
\end{theo}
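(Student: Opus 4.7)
The plan is to fix a tree decomposition $(T,\{B_x\}_{x\in V(T)})$ of $G$ of width $t$, root $T$ at an arbitrary node, and for each $v\in V(G)$ let $t_v$ denote the node of $T$ closest to the root whose bag contains $v$. I take $L$ to be any linear extension of the partial order ``$u\prec v$ iff $t_u$ is a strict ancestor of $t_v$ in $T$'', which is the standard ordering used for tree-decomposition based colouring-number arguments. For each $y\in V(G)$, let $S_y=\{v\in B_{t_y}\setminus\{y\}:t_v\text{ is a strict ancestor of }t_y\}$, the classical vertex separator attached to the tree edge above $t_y$; since $|B_{t_y}|\le t+1$ and $y\notin S_y$, we have $|S_y|\le t$.

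The central step is to show that every $x\in D_{L,k}(y)$ belongs either to $\{v\in B_{t_y}\setminus\{y\}:t_v=t_y\}$ or to $\bigcup_{e\in S_y}\bigl(Q_{L,\lfloor k/2\rfloor}(e)\cup\{e\}\bigr)$. The key observation is that for any path $P=z_0z_1\cdots z_s$ in $G$, consecutive vertices $z_i,z_{i+1}$ share a common bag of $T$, so $t_{z_i}$ and $t_{z_{i+1}}$ are always in ancestor--descendant relation in $T$. Starting from $z_s=y$ and moving backwards, a short downward induction then shows that for every $i$ with $i\ge\lfloor k/2\rfloor+1$, the node $t_{z_i}$ is a descendant of $t_y$ in $T$: a common bag of $z_i,z_{i+1}$ lies in the subtree rooted at $t_y$, so $t_{z_i}$ is an ancestor of a node in that subtree, and the hypothesis $z_i\ge_Ly$ rules out the possibility that $t_{z_i}$ is a strict ancestor of $t_y$ because of how $L$ respects the ancestor relation. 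Let $j$ be the smallest index with $t_{z_j}$ a descendant of $t_y$; then $j\le\lfloor k/2\rfloor+1$. If $j=0$, the constraint $x<_Ly$ combined with $t_x$ a descendant of $t_y$ forces $t_x=t_y$, so $x\in B_{t_y}\setminus\{y\}$ with $t_x=t_y$. If $j\ge1$, the standard separator argument applied to the edge $z_{j-1}z_j$ (which crosses into the subtree region of $t_y$) forces $z_{j-1}\in S_y$, and the prefix $z_0\cdots z_{j-1}$ of length $j-1\le\lfloor k/2\rfloor$, on which $x=z_0$ is the minimum vertex under $L$, witnesses $x\in Q_{L,\lfloor k/2\rfloor}(z_{j-1})\cup\{z_{j-1}\}$.

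Combining the two possibilities gives
\[
|D_{L,k}(y)|\;\le\;\bigl(t-|S_y|\bigr)+|S_y|\cdot\wcol_{\lfloor k/2\rfloor}(G).
\]
To finish, I invoke the known bound $\wcol_k(G)\le\binom{k+t}{t}$ for graphs of tree-width at most $t$ under the very same ordering~$L$ (a classical result that can be cited or included as a short preliminary lemma, proved by a similar separator-based induction). Applying it at distance $\lfloor k/2\rfloor$ and using $|S_y|\le t$ yields $|D_{L,k}(y)|\le t\binom{\lfloor k/2\rfloor+t}{t}$, whence $\dcol_k(G)\le t\binom{\lfloor k/2\rfloor+t}{t}+1$. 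The main obstacle is making the downward induction watertight: it hinges on the precise interaction between the Helly-type property of tree decompositions (adjacent vertices share a bag, forcing their top nodes into ancestor--descendant relation) and the fact that $L$ is a linear extension of the ancestor partial order. It is precisely this step that allows the weak-colouring-number estimate to be applied at the halved distance $\lfloor k/2\rfloor$ rather than $k$, and is what makes the resulting bound strictly stronger than what would follow from the trivial inequality $\dcol_k(G)\le\wcol_k(G)$.
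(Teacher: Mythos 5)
Your proof is correct, and at its core it uses the same two\mbox{-}factor counting strategy as the paper: locate a pivot vertex on the witnessing path that lies in a set of size at most $t$ attached to $y$, observe that $x$ is weakly $\lfloor k/2\rfloor$-accessible from that pivot, and multiply by the Grohe et al.\ bound $|Q_{L,\lfloor k/2\rfloor}(\cdot)|\le\binom{\lfloor k/2\rfloor+t}{t}-1$ (Lemma~\ref{tw}), which you also need for your specific ordering $L$ --- your displayed $\wcol_{\lfloor k/2\rfloor}(G)$ should strictly be $1+\max_e|Q_{L,\lfloor k/2\rfloor}(e)|$ under that $L$, as you acknowledge. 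Where you genuinely diverge is in how the pivot is identified. The paper never constructs a tree decomposition: it takes any ordering $L$ with $\max_y|R_{L,\infty}(y)|\le t$ (which exists since $\col_\infty(G)=\mathrm{tw}(G)+1$) and splits the path at the vertex $z_{\sigma(u)}$ of largest index with $z_{\sigma(u)}<_Lv$; the definition of $D_{L,k}$ forces $\sigma(u)\le\lfloor k/2\rfloor$, the pivot lands in $R_{L,k}(v)$, and the estimate $|D_{L,k}(v)|\le|R_{L,k}(v)|\cdot\bigl(|Q_{L,\lfloor k/2\rfloor}(\cdot)|+1\bigr)$ follows in a few lines. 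You instead fix a rooted tree decomposition, order by the ancestor relation on top nodes, and split the path where it first enters the subtree rooted at $t_y$, so the pivot lands in the separator $S_y\subseteq B_{t_y}$; your downward induction (that $t_{z_i}$ is a descendant of $t_y$ for $i\ge\lfloor k/2\rfloor+1$) is sound and correctly combines the Helly property of bags with the fact that $L$ extends the ancestor order, and the final estimate $(t-|S_y|)+|S_y|\binom{\lfloor k/2\rfloor+t}{t}\le t\binom{\lfloor k/2\rfloor+t}{t}$ is valid. The trade-off is that your version is more self-contained and makes the separator structure explicit, while the paper's purely order-theoretic version is strictly more flexible: the identical counting is reused in Lemma~\ref{flatbound} for the contracted graph of a flat decomposition, where one only has an ordering with bounded $R_{L,\infty}$ and no tree decomposition in hand.
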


\begin{theo}\label{minor}\mbox{}\\*
  Let $k$ and $t$ be positive integers with $t\ge4$. For every $K_t$-minor
  free graph $G$ we have
  $\displaystyle\dcol_k(G)\le
  \binom{t+\lfloor k/2\rfloor-1}{t-2}\cdot(t-3)(2k+1)$.
\end{theo}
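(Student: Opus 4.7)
The plan is to extend to the distance-colouring number the argument van den Heuvel et al.~\cite{vdHetal16} use to bound $\wcol_k$ on $K_t$-minor-free graphs. I would take the vertex ordering $L$ they construct via an iterated chordal decomposition of depth $t-2$; in their setup every $x \in Q_{L,k}(y)$ carries a \emph{type} consisting of a depth-drop sequence $(a_1,\ldots,a_{t-2})$ of non-negative integers with $\sum_i a_i \le k$, together with a position in a \emph{bag} of size at most $(t-3)(2k+1)$, which yields $\wcol_k(G) \le \binom{k+t-2}{t-2}(t-3)(2k+1)$ and, via the inequality $\dcol_k\le\wcol_k$, a weaker version of Theorem~\ref{minor}. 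The goal is to reduce the $\binom{k+t-2}{t-2}$ factor by exploiting the two-phase structure of paths witnessing membership in $D_{L,k}(y)$.

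To bound $\dcol_k(G)$, set $m=\lfloor k/2\rfloor$ and consider $x \in D_{L,k}(y)$ with witnessing path $P_x = z_0 z_1 \cdots z_s$. If $s \le m$, collect $x$ into a single ``degenerate'' type responsible for the $+1$ in the stated bound, since $P_x$ already witnesses $x \in Q_{L,m}(y)$. If $s > m$, the midpoint $z_{m+1}$ lies in the second half of $P_x$, so the subpath $z_{m+1}\cdots z_s$ has $y$ as its $L$-minimum and length at most $\lceil k/2\rceil - 1 \le m$; thus $z_{m+1}$ is assigned one of the $\binom{m+t-2}{t-2}$ depth-drop types arising from the bound on $\wcol_m$. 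The front piece $z_0 z_1 \cdots z_{m+1}$, of length at most $m+1$ and with $L$-minimum $x$, extends this depth-drop sequence by a single unit in one of the $t-2$ chordal-recursion coordinates, contributing an extra factor of $t-2$. Since the total path length from $x$ to $y$ is at most $k$, each combined type still corresponds to a bag of size at most $(t-3)(2k+1)$, and summing over all $(t-2)\binom{\lfloor k/2\rfloor+t-2}{t-2}+1$ types yields the claimed bound.

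The main obstacle will be establishing the extension step: proving that a two-phase witnessing path, whose front half has $L$-minimum $x$ rather than $y$, really does encode as a chordal-partition type for $z_{m+1}$ extended by exactly one coordinate-step to reach $x$, and that distinct $x$ yield distinct encodings modulo the $(t-3)(2k+1)$-sized bag ambiguity. This requires re-examining the iterative BFS construction of \cite{vdHetal16} to verify that an ``overshoot'' in one recursion coordinate produces at most $t-2$ new types per base type, and that the deepest bag width is unaffected because the total path length remains bounded by $k$. The border subcases $s=m+1$ and $z_{m+1}=y$ will need to be checked directly to confirm they are absorbed without double-counting.
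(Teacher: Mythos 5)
Your overall plan --- reuse the flat decompositions of \cite{vdHetal16} for $K_t$-minor free graphs and improve the binomial from $\binom{k+t-2}{t-2}$ to $(t-2)\binom{\lfloor k/2\rfloor+t-2}{t-2}$ by splitting each witnessing path into two pieces --- is the right idea in spirit, and your final count matches the paper's. But the way you split the path does not work. You cut at the fixed index $m+1$ and claim the back subpath $z_{m+1}\cdots z_s$ ``assigns $z_{m+1}$ one of the $\binom{m+t-2}{t-2}$ depth-drop types arising from the bound on $\wcol_m$''. This reverses the direction of the count: by the definition of $D_{L,k}(y)$ every vertex on that subpath is $\ge_L y$, so what the subpath witnesses is that $y$ is weakly accessible \emph{from} $z_{m+1}$. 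The quantity $\binom{m+t-2}{t-2}$ bounds $|Q_{L,m}(z)|$, the number of vertices weakly reachable from a fixed $z$; it says nothing about the number of vertices $z$ from which a fixed $y$ is weakly reachable, which can be unbounded (a star whose centre comes first in $L$ already shows this). So for a fixed $y$ your scheme does not control the number of possible pivots $z_{m+1}$, hence does not control the number of types. The second half of your encoding is also unsubstantiated: the front piece $z_0\cdots z_{m+1}$ has length up to $m+1$ and $L$-minimum $x$, so in the type machinery it would carry a full depth-drop sequence with $\sum_i a_i\le m+1$, not ``a single unit in one coordinate''; nothing forces a one-step increment, and the factor $t-2$ in the true bound has a different origin.

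The paper's proof (Lemma \ref{flatbound} combined with Lemma \ref{flatslim}, via the argument of Theorem \ref{dcoltw}) places the pivot elsewhere: it takes $z_{\sigma}$ to be the vertex of $P_x$ nearest to $y$ along the path with $z_{\sigma}<_Ly$. The definition of $D_{L,k}(y)$ forces $\sigma\le\lfloor k/2\rfloor$, and the subpath from $y$ to $z_{\sigma}$ has all internal vertices $\ge_Ly$, so $z_{\sigma}$ is \emph{strongly} $k$-accessible from $y$; in the contracted decomposition graph one has $|R_{L,k}|\le t-2$ by the width bound, and this is where the factor $t-2$ comes from. The remaining subpath from $z_{\sigma}$ to $x$ witnesses $x\in Q_{L,\lfloor k/2\rfloor}(z_{\sigma})$, giving the factor $\binom{\lfloor k/2\rfloor+t-2}{t-2}$ by Lemma \ref{tw}, and the per-part factor $(t-3)(2k+1)$ comes from $f$-flatness. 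If you replace your fixed-index cut by this ``last vertex below $y$'' pivot and carry out the count in the contracted graph rather than in $G$ itself, your argument becomes the paper's.
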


\noindent
Since outerplanar graphs~$G$ have tree-width at most~2, combining
Theorems~\ref{thm10d}\,(a) and~\ref{dcoltw} gives $\chi(\edp{G}{3})\le10$.
Similarly, from Theorem~\ref{dcolplan} we see that for planar graphs~$G$ we
have $\chi(\edp{G}{3})\le105$, while for graphs~$G$ embeddable on the torus
we have $\chi(\edp{G}{3})\le127$.

We will prove those theorems in the remainder of this section. They are
based on the methods developed in Van den Heuvel et al.~\cite{vdHetal16} to
obtain bounds for the generalised colouring numbers.

\subsection{Graphs with bounded tree-width}\label{ssec3.1}

Recall that Proposition~\ref{pro:coltwtd} tells us that
$\col_\infty(G)=\mathrm{tw}(G)+1$. In~\cite{Getal16}, Grohe et al.\
provided a sharp upper bound for the weak colouring numbers $\wcol_k(G)$ of
a graph $G$ in terms of its tree-width. The following result is implicit in
the proof of \cite[Theorem~4.2]{Getal16}.

\begin{lem}[Grohe et al.~\cite{Getal16}]\label{tw}\mbox{}\\*
  Let $G$ be a graph and $L$ a linear ordering of $V(G)$ with
  $\max\limits_{y\in V(G)}|R_{L,\infty}(y)|\le t$. For every positive
  integer $k$ and vertex $y\in V(G)$ we have
  $\displaystyle|Q_{L,k}(y)|\le \binom{t+k}{t}-1$.
\end{lem}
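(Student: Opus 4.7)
The plan is to prove the bound by induction on $k$, with the recursion designed to match Pascal's identity $\binom{k+t}{t}=\binom{k-1+t}{t}+\binom{k+t-1}{t-1}$. The base case $k=1$ is immediate since $Q_{L,1}(y)$ consists of the $L$-backward neighbours of $y$, a subset of $R_{L,\infty}(y)$, giving $|Q_{L,1}(y)|\le t=\binom{t+1}{t}-1$.

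For the inductive step, I would associate to each $x\in Q_{L,k}(y)$ a canonical decomposition of a shortest weak-$k$-witness path $P=y=v_0,v_1,\ldots,v_s=x$, where $s\le k$ and $x<_Lv_i$ for $0<i<s$. Letting $u_0=y$ and $u_j$ be the $j$-th successive new $L$-minimum along $P$, one obtains $y=u_0>_Lu_1>_L\cdots>_Lu_r=x$ with subpath lengths $\ell_j\ge 1$ summing to $s\le k$. The key structural observation, which drives the whole argument, is that $u_j\in R_{L,\infty}(u_{j-1})$ for each $j$: by the choice of $u_{j-1}$ as a running minimum, every internal vertex on the subpath from $u_{j-1}$ to $u_j$ is strictly $L$-greater than $u_{j-1}$. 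In particular $u_1\in R_{L,\infty}(y)$, so there are at most $t$ possibilities for $u_1$.

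The recursion is then driven by splitting on $u_1$. For a fixed $u_1$, the tail of $P$ from $u_1$ to $x$ has length at most $k-1$ with internal vertices $>_Lx$, and so witnesses $x\in Q_{L,k-1}(u_1)$; by induction this contributes at most $\binom{k-1+t}{t}-1$ choices of $x$. Summing naively over the $\le t$ values of $u_1$ is too weak, and this is the main obstacle: the polynomial bound $\binom{k+t}{t}$ requires exploiting the fact that the sets $R_{L,\infty}(\cdot)$ overlap substantially. Passing to the chordal completion $G^+$ of $G$ relative to $L$ (in which $L$ is a perfect elimination ordering and the $R_{L,\infty}(u)$ are precisely the $L$-backward neighbourhoods in $G^+$), one sees that these sets form nested chains of ancestors in the elimination forest of $G^+$. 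Using this nesting, descending from $y$ to $u_1$ already commits one of the $t$ available ancestor directions, so the tail of $P$ sees an \emph{effective} $\infty$-reach of only $t-1$. This refinement yields $|Q_{L,k}(y)|\le(\binom{k-1+t}{t}-1)+\binom{k+t-1}{t-1}$, and Pascal's identity then gives the claimed bound $\binom{k+t}{t}-1$.
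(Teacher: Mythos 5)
The paper itself does not prove this lemma: it is imported from Grohe et al.~\cite{Getal16} with the remark that it is implicit in their proof of Theorem~4.2, so your argument has to stand on its own. Up to a point it does. The base case is correct; the running-minima decomposition correctly yields $u_j\in R_{L,\infty}(u_{j-1})$; and the passage to the fill-in graph $G^+$ (in which each $R_{L,\infty}(v)$ becomes a clique of at most $t$ ancestors of $v$ in the elimination forest, so that $Q_{L,k}(y)$ lies on the ancestor chain of $y$) is a legitimate and standard reduction. The identity $\bigl(\binom{k-1+t}{t}-1\bigr)+\binom{k+t-1}{t-1}=\binom{k+t}{t}-1$ is also fine.

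The gap sits exactly where you flag ``the main obstacle'', and it is not closed. The assertion that after descending to a non-principal $u_1$ ``the tail of $P$ sees an effective $\infty$-reach of only $t-1$'' is unjustified, and it is not true in any literal sense: every $u_1\in R_{L,\infty}(y)$ still satisfies $|R_{L,\infty}(u_1)|\le t$ with no improvement, and the nesting of backward neighbourhoods does not shrink any of these sets. What the nesting does give (working in $G^+$, with $w_1$ the $L$-maximum of $R_{L,\infty}(y)$ and $w_j$ any other element) is $Q_{L,k-1}(w_j)\subseteq Q_{L,k}(w_1)$; but that places the non-principal branches inside a set with the \emph{same} parameters $k$ and $t$, so it makes no inductive progress. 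To obtain your recursion from it you would need $|Q_{L,k}(w_1)\setminus Q_{L,k-1}(w_1)|\le\binom{k+t-1}{t-1}$, which is essentially a restatement of the target bound and does not follow from the inductive hypothesis $|Q_{L,k'}(v)|\le\binom{k'+t}{t}-1$. Since the term $\binom{k+t-1}{t-1}$ amounts to invoking the lemma with $t-1$ in place of $t$, a correct proof along these lines must actually exhibit a substructure of backward reach $t-1$ to which the induction applies, or else construct an explicit injection of $Q_{L,k}(y)$ into the nonzero $t$-tuples of non-negative integers summing to at most $k$. Your write-up identifies the right recursion and the right structural facts, but supplies no mechanism connecting them, and that connection is where all the work of the lemma lies.
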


\noindent
Although we can define tree-width of a graph in terms of its
$\infty$-colouring number, in order to prove Theorem~\ref{dcoltw} we shall
make use of a better known definition which is in terms of \emph{k-trees}.
A \emph{k-tree} is a graph which is either a clique of size $k+1$ or is
obtained from a smaller $k$-tree by adding a vertex adjacent to $k$
vertices which are pairwise adjacent. The \emph{tree-width} of a graph $G$
is the smallest $k$ such that $G$ is a subgraph of a $k$-tree.

Let $G$ be a $k$-tree. For a fixed way of constructing $G$ from a
$(k+1)$-clique $K_0$ we obtain a linear ordering $L$ of $V(G)$ as follows.
Let the vertices of $K_0$ be the smallest in the ordering, and order them
in an arbitrary way. Then for $y\notin K_0$ we let $x<_L y$ if $x$ was
added to the $k$-tree before $y$. We call this a \emph{simplicial
  ordering}. For $y\notin K_0$ we note that, by definition of~$L$,
$R_{L,1}(y)$ induces a $k$-clique.

\begin{proof}[Proof of Theorem~\ref{dcoltw}.]
  Since $\dcol_k(G)$ cannot decrease if we add edges, we may assume
  that~$G$ is a $k$-tree. Let $L$ be a simplicial ordering derived as
  above, where we started with some $(k+1)$-clique~$K_0$ in~$G$. While in
  general we have $Q_{L,\lfloor k/2\rfloor+1}(y)\subseteq D_{L,k}(y)$, we
  shall prove that our choice of $G$ and $L$ implies
  $Q_{L,\lfloor k/2\rfloor+1}(y)= D_{L,k}(y)$, for every $k\ge1$ and
  $y\in V(G)$.

  Our first step in this direction will be proving that every vertex
  $y\in V(G)$ satisfies $R_{L,1}(y) =R_{L,\infty}(y)$. Notice that if
  $y\in V(G)$ belongs to~$K_0$, then $R_{L,\infty}(y)$ only contains
  vertices in~$K_0$ and, since $K_0$ induces a clique in $G$, all of these
  vertices belong to $R_{L,1}(y)$. So consider some $y\notin K_0$. From the
  construction of a $k$-tree, it follows that removing $R_{L,1(y)}$
  disconnects the graph, and that the component $C_y$ containing $y$
  satisfies $y<_Lz$ for all $ z \in C_y$, $z\ne y$. This tells us that any
  $xy$-path with $x<_Ly$ and $y<_L z$ for all internal vertices $z$ must
  have its interior in $C_y$. In turn, this implies that for such a path to
  exist we must have $x\in R_{L,1}(y)$. This shows
  $R_{L,1}(y)=R_{L,\infty}(y)$.

  Suppose $x,y\in V(G)$ satisfy $x\in D_{L,k}(y)$ for some integer $k\ge1$.
  By the definition of $D_{L,k}(y)$, we have that there is an $xy$-path
  $P=z_0,\dots,z_s$, with $x=z_0$, $y=z_s$, of length $s\le k$, such
  that~$x$ is the minimum vertex in $P$ with respect to~$L$, and such that
  $y\le_Lz_i$ for $\bigl\lfloor\frac12k\bigr\rfloor+1\le i\le s$. Let
  $0\le d\le s$ be the largest index such that $z_d<y$. The subpath
  $z_d,\dots,z_s$ of $P$ guarantees that that $z_d\in R_{L,\infty}(y)$.
  Since $R_{L,1}(y)=R_{L,\infty}(y)$, we know that $z_d\in N(y)$. By the
  definition of $P$ and choice of $d$, we also know that $d\le
  \bigl\lfloor\frac12k\bigr\rfloor$. Therefore, the path
  $z_0,\dots,z_d,z_s$ is an $xy$-path of length at most
  $\bigl\lfloor\frac12k\bigr\rfloor+1$ with no other restriction than the
  one that~$x$ is its minimum vertex with respect to~$L$. This means that
  $x\in Q_{L,\lfloor k/2\rfloor+1}(y)$. Since the choice of $x,y$ and~$k$
  was arbitrary, we have that $Q_{L,\lfloor k/2\rfloor+1}(y)= D_{L,k}(y)$
  for every integer $k\ge1$ and every $y\in V(G)$.

  Since our ordering satisfies $t\ge R_{L,1}(y)=R_{L,\infty}(y)$, the bound
  on $D_{L,k}(y)$ follows from Lemma~\ref{tw}.
\end{proof}

\noindent
It is possible to modify the examples in Grohe et al.~\cite{Getal16} to
show that the upper bounds on $\dcol_k(G)$ in Theorem~\ref{dcoltw} for
graphs with tree-width at most~$t$ are best possible.

\subsection{Graphs with excluded complete minors}\label{ssec3.2}

In order to provide upper bounds for the generalised colouring numbers for
graphs that exclude a fixed minor, Van den Heuvel et al.~\cite{vdHetal16}
constructed ordered vertex partitions where each part has neighbours in
only a bounded number of earlier parts and the intersection of each part
with the $k$-neighbourhood of an earlier part is also bounded. We will make
use of these decompositions for our proofs as well.

A \emph{decomposition} of a graph~$G$ is a sequence
$\mathcal{H}=(H_1,\ldots,H_\ell)$ of non-empty subgraphs of $G$ such that
the vertex sets $V(H_1),\ldots,V(H_\ell)$ partition $V(G)$. The
decomposition $\mathcal H$ is \emph{connected} if each $H_i$ is connected.

Let $\mathcal{H}=(H_1,\ldots,H_\ell)$ be a decomposition of a graph $G$,
$i$ a positive integer, and $C$ a component of
$G-\bigcup_{1\le j\le i}V(H_j)$. We define the \emph{$i$-th separating
  number of $C$} as $s_i(C)=|\{j\in[i]\mid E(C,H_j)\ne\varnothing\}|$,
where $E(C,H_j)$ is the set of edges with one end-vertex in $C$ and the
other end-vertex in $H_j$. Let $w_i(\mathcal{H})=\max s_i(C)$, where the
maximum is taken over all components~$C$ of $G-\bigcup_{1\le j< i}V(H_j)$.
We define the \emph{width} of $\mathcal{H}$ as
$W(\mathcal{H})=\max\limits_{1\le i\le\ell}w_i(\mathcal{H})$.

Let $G$ be a graph, let $H\subseteq G$ be a \emph{connected} subgraph of
$G$, and let $f:\mathbb{N} \to\mathbb{N}$ be a function. We say that $H$
\emph{$f$-spreads on $G$} if, for every $k\in\mathbb{N}$ and $v\in V(G)$,
we have
\[|N^k[v]\cap V(H)|\le f(k).\]
We say a decomposition $\mathcal{H}$ is \emph{$f$-flat} if each $H_i$
$f$-spreads on $G-\bigcup_{1\le j<i}V(H_j)$. We say $\mathcal{H}$ is a
\emph{flat decomposition} if $\mathcal{H}$ is an $f$-flat decomposition for
some function $f:\mathbb{N}\to\mathbb{N}$.

Van den Heuvel et al.~\cite{vdHetal16} related the width of a connected
decomposition to the tree-width of the minor obtained by contracting each
part.

\begin{lem}[Van den Heuvel et al.~\cite{vdHetal16}]\label{width}\mbox{}\\*
  Let $G$ be a graph, and let $\mathcal{H}=(H_1,\dots,H_\ell)$ be a
  connected decomposition of $G$ of width at most~$t$. By contracting each
  (connected) subgraph $H_i$ to a single vertex, we obtain a graph $H$ with
  $\ell$ vertices and tree-width at most $t$.
\end{lem}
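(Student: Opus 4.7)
The plan is to show $\col_\infty(H) \le t+1$, which by Proposition~\ref{pro:coltwtd}(a) is equivalent to $\mathrm{tw}(H) \le t$. The natural candidate ordering~$L$ on $V(H) = \{h_1, \ldots, h_\ell\}$ is inherited from the decomposition, namely $h_i <_L h_j$ iff $i < j$, where $h_i$ is the vertex obtained by contracting $H_i$. The goal becomes to bound $|R_{L, \infty}(h_i)|$ by $t$ for every $i$.

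The crucial step is a lifting from $H$ back to $G$. Suppose $h_j \in R_{L,\infty}(h_i)$, witnessed by a path $h_j, h_{a_1}, \ldots, h_{a_{r-1}}, h_i$ in $H$ with $j < i$ and $a_s > i$ for $1 \le s \le r-1$. In the graph $G' := G - \bigcup_{m < i} V(H_m)$, every $H_{a_s}$ and $H_i$ itself remain intact and connected, and consecutive ones share at least one edge in $G$; so their vertex sets lie in a single component $C_i^*$ of $G'$, namely the component containing the connected subgraph $H_i$. Moreover, the edge $h_j h_{a_1}$ in $H$ gives $E(H_j, H_{a_1}) \ne \varnothing$ in $G$, and since $V(H_{a_1}) \subseteq V(C_i^*)$ we conclude $E(H_j, C_i^*) \ne \varnothing$. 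The case $r = 1$ (a direct edge $h_j h_i$ in $H$) is handled identically.

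Consequently $|R_{L,\infty}(h_i)|$ is at most the number of indices $j < i$ with $E(H_j, C_i^*) \ne \varnothing$, and this is exactly the quantity controlled by the width condition $W(\mathcal{H}) \le t$ applied to the component $C_i^*$. This yields $|R_{L,\infty}(h_i)| \le t$; taking the maximum over $i$ gives $\col_\infty(H) \le t+1$, hence $\mathrm{tw}(H) \le t$. After contraction one may need to delete loops and parallel edges to obtain a simple graph, but this can only decrease the tree-width.

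I expect the main obstacle to be bookkeeping around the definition of width: the paper defines $s_i(C)$ and $w_i(\mathcal{H})$ with slightly different indexing conventions (some taking components of $G - \bigcup_{j \le i} V(H_j)$, some of $G - \bigcup_{j < i} V(H_j)$, and counting $H_j$ with $j \in [i]$ versus $j < i$), so one must carefully verify that the width bound applies to $C_i^*$ in precisely the form needed---bounding the number of $H_j$ with $j < i$ adjacent to $C_i^*$---rather than to a smaller component of $G - \bigcup_{j \le i} V(H_j)$ or with $H_i$ itself generating an off-by-one shift. Once the correspondence between back-$\infty$-neighbours of $h_i$ in $H$ and subgraphs $H_j$ with $j<i$ attached to $C_i^*$ in $G$ is established, the width hypothesis closes the argument immediately.
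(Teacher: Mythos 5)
Your proposal is correct and follows exactly the route the paper attributes to~\cite{vdHetal16}: it does not reprove the lemma but states that the argument shows $\col_\infty(H)\le t+1$ via the ordering $H_i<_L H_j$ for $i<j$ with $\max_i|R_{L,\infty}(H_i)|\le t$, and then invokes Proposition~\ref{pro:coltwtd}. Your lifting of a path in $H$ with internal indices greater than $i$ to the component of $G-\bigcup_{m<i}V(H_m)$ containing $H_i$ is the right way to fill in that sketch, and your remark about the indexing discrepancy in the definition of $s_i$ versus $w_i$ correctly identifies the only point needing care.
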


\noindent
The proof of the lemma in~\cite{vdHetal16} shows the power of generalised
colouring numbers. It actually gives a short argument that the contracted
graph~$H$ satisfies $\col_\infty(H)\le t+1$. The bound on the tree-width
then follows by Proposition~\ref{pro:coltwtd}. Moreover, the proof shows
that the ordering~$L$ of $V(H)$ obtained by setting $H_i<_LH_j$ if $i<j$
satisfies $\max\limits_{1\le i\le\ell}|R_{L,\infty}(H_i)|\le t$. Using this
property we can prove that if the decomposition from which $H$ was obtained
is $f$-flat, then we can find an upper bound on $\dcol_k(G)$ in terms of
$f(k)$.

\begin{lem}\label{flatbound}\mbox{}\\*
  Let $f:\mathbb{N}\to\mathbb{N}$ and let $t,k$ be positive integers. For
  every graph $G$ that admits a connected $f$-flat decomposition of width
  at most~$t$ we have
  $\displaystyle\dcol_{k}(G)\le
  \binom{t+\lfloor k/2\rfloor+1}{t}\cdot f(k)$.
\end{lem}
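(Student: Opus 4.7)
The plan is to mimic the proof of Theorem~\ref{dcoltw} at the level of the decomposition, lifting the tree-width bound from the contracted graph $H$ back to $G$ and paying only one extra factor of $f(k)$. Let $\mathcal{H}=(H_1,\ldots,H_\ell)$ be a connected $f$-flat decomposition of $G$ of width at most $t$, and let $H$ be the graph obtained by contracting each $H_i$ to a single vertex $\tilde{H}_i$. As noted after Lemma~\ref{width}, the ordering $L_H$ of $V(H)$ given by $\tilde{H}_i<_{L_H}\tilde{H}_j$ whenever $i<j$ satisfies $|R_{L_H,\infty}(\tilde{H}_i)|\le t$; the argument in the proof of Theorem~\ref{dcoltw} then yields $|D_{L_H,k}(\tilde{H}_i)|\le t\binom{\lfloor k/2\rfloor+t}{t}$ for every $i$. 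I would order $V(G)$ by placing all vertices of $H_i$ before all vertices of $H_j$ whenever $i<j$, breaking ties inside each part arbitrarily, and write $\pi(v)$ for the index of the part containing $v$.

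Fix $y\in V(G)$ with $\pi(y)=i$. The heart of the argument is to show that every $v\in D_{L,k}(y)$ with $\pi(v)=j<i$ satisfies $\tilde{H}_j\in D_{L_H,k}(\tilde{H}_i)$. To prove this, take a witnessing path $P=z_0,z_1,\ldots,z_s$ for $v\in D_{L,k}(y)$ and project it to the walk $W=\tilde{H}_{\pi(z_0)},\ldots,\tilde{H}_{\pi(z_s)}$ in $H$. Shortcutting $W$ whenever a vertex repeats yields a path $P'=v_0,v_1,\ldots,v_r$ in $H$ from $\tilde{H}_j$ to $\tilde{H}_i$ of length $r\le s\le k$, whose vertices correspond to an increasing sequence of positions $0=i_0<i_1<\cdots<i_r=s$ in $W$, so $v_m=\tilde{H}_{\pi(z_{i_m})}$ and $i_m\ge m$. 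Because $v$ is minimum on $P$ and $L$ orders by part first, $\tilde{H}_j=\tilde{H}_{\pi(v)}$ is minimum on $P'$; and for any $m\ge\lfloor k/2\rfloor+1$ we have $i_m\ge\lfloor k/2\rfloor+1$, so $z_{i_m}\ge_L y$ and hence $v_m\ge_{L_H}\tilde{H}_i$. This verifies all three defining properties of $D_{L_H,k}(\tilde{H}_i)$; I expect this walk-to-path bookkeeping to be the only real technical obstacle.

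The remainder is counting. By the previous paragraph, the set of indices $j<i$ with $D_{L,k}(y)\cap V(H_j)\ne\varnothing$ has size at most $|D_{L_H,k}(\tilde{H}_i)|\le t\binom{\lfloor k/2\rfloor+t}{t}$. For each such $j$, every witnessing path lies entirely in parts of index at least $j$, hence in $G_j=G-\bigcup_{j'<j}V(H_{j'})$, so $D_{L,k}(y)\cap V(H_j)\subseteq N^k_{G_j}[y]\cap V(H_j)$, a set of size at most $f(k)$ by $f$-flatness. Applying the same argument inside $G_i$ to the part $H_i$ itself yields $|D_{L,k}(y)\cap V(H_i)|\le f(k)-1$, since $y$ belongs to $N^k_{G_i}[y]\cap V(H_i)$ but not to $D_{L,k}(y)$. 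Summing these contributions,
\[|D_{L,k}(y)|\le t\binom{\lfloor k/2\rfloor+t}{t}\cdot f(k)+(f(k)-1)=\Bigl(t\binom{\lfloor k/2\rfloor+t}{t}+1\Bigr)\cdot f(k)-1,\]
and the claimed bound follows from $\dcol_k(G)=1+\min_L\max_y|D_{L,k}(y)|$.
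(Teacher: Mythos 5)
Your proof is correct and follows essentially the same route as the paper: contract the decomposition, bound $|D_{L_H,k}(\tilde H_i)|$ in the contracted graph via Lemma~\ref{tw} and the argument of Theorem~\ref{dcoltw}, order $V(G)$ by parts, and charge each vertex of $D_{L,k}(y)$ to a part in $D_{L_H,k}(\tilde H_i)\cup\{\tilde H_i\}$ before applying $f$-flatness. Your walk-projection argument carefully verifies the containment the paper dismisses as ``easy to see,'' and you are in fact slightly more precise than the paper in invoking $f$-flatness inside $G_j=G-\bigcup_{j'<j}V(H_{j'})$ (where the spreading condition actually holds) rather than in $G$ itself.
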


\begin{proof}
  The proof of this lemma is similar to that of
  \cite[Lemma~3.5]{vdHetal16}. Let $\mathcal{H}=(H_1,\dots,H_\ell)$ be a
  connected $f$-flat decomposition of $G$ of width $t$. Since $\mathcal{H}$
  is connected, we know, by Lemma~\ref{width}, that contracting the
  subgraphs in~$\mathcal{H}$ leads to a graph $H$ with tree-width at
  most~$t$. We identify the vertices of~$H$ with the subgraphs $H_i$, and
  define a linear ordering $L$ on $V(H)$ by setting $H_i<_LH_j$ if $i<j$.
  By the proof of \cite[Lemma~3.1]{vdHetal16} we get that~$L$ satisfies
  $\max\limits_{1\le i\le\ell}|R_{L,\infty}(H_i)|\le t$. Using
  Lemma~\ref{tw} this implies that
  $|Q_{L,\lfloor k/2\rfloor +1}(H_i)|\le
  \binom{t+\lfloor k/2\rfloor+1}{t}-1$ for any vertex $H_i\in V(H)$.
  Arguing as in the proof of Theorem~\ref{dcoltw}, we see that for every
  $H_i\in V(H)$ we have
  $|D_{L,k}(H_i)|\le \binom{t+\lfloor k/2\rfloor+1}{t}-1$.

  From $L$ we define an ordering $L'$ on $V(G)$ in the following way. For
  $u\in H_i$ and $v\in H_j$ with $i\ne j$, we let $u<_{L'}v$ if $i<j$.
  Then, for every $1\le i\le \ell$, we order the vertices of $H_i$ in any
  order. It is easy to see that any vertex $v\in H_i$ satisfies
  \[D_{L',k}(v)\subseteq
  N^k[v]\cap\bigl(H_i\cup \{\,H_j\mid H_j\in D_{L,k}(H_i)\,\}\bigr).\]
  Hence, we have that there are at most
  $\binom{t+\lfloor k/2\rfloor+1}{t}$ subgraphs among $H_1,\ldots,H_{\ell}$
  in~$G$ that contain vertices from $D_{L',k}(v)$. Since $\mathcal{H}$ is
  $f$-flat, we know that the intersection of each of these subgraphs with
  $N^k[v]$ is at most $f(k)$. Finally, since $D_{L',k}(v)$ is a proper
  subset of $N^k[v]$ (as $v\notin D_{L',k}(v)$), the result follows.
\end{proof}

\noindent
Also in~\cite{vdHetal16}, it was proved that graphs that do not contain a
complete graph as a minor have flat decompositions of small width.

\begin{lem}[Van den Heuvel et
  al.~\cite{vdHetal16}]\label{flatslim}\mbox{}\\*
  Let $t\ge 4$ and let $f:\mathbb{N}\to\mathbb{N}$ be the function
  $f(k)=(t-3)(2k+1)$. For every $K_t$-minor free graph~$G$ we have that
  there is a connected $f$-flat decomposition of $G$ with width at most
  $t-2$.
\end{lem}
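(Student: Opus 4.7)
The plan is to construct $\mathcal{H}$ recursively, proving correctness by induction on $|V(G)|$. I first reduce to the case that $G$ is connected, handling the components independently and concatenating their decompositions. I then strengthen the inductive statement to carry a useful ``boundary profile'': for every connected $K_t$-minor free subgraph $G'$ of $G$ and every collection $\mathcal{P}$ of $s\le t-3$ pairwise disjoint connected subgraphs of $G$ outside $V(G')$, each adjacent to $G'$, one can build a connected $f$-flat decomposition of $G'$ such that every component $C$ arising after peeling any prefix of this decomposition is adjacent to at most $t-2-s$ parts from within the decomposition, so that its total attachment (counting also the parts in $\mathcal{P}$) is at most $t-2$. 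The lemma will follow by taking $s=0$.

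For the recursive step I construct the first part $H$ of the decomposition of $G'$ as a connected union of at most $t-3-s$ shortest paths in $G'$. Pick an anchor $u_0\in V(G')$ (adjacent to some member of $\mathcal{P}$ if $s\ge 1$, arbitrary otherwise). Let $P_1$ be a shortest path in $G'$ from $u_0$ to a vertex at maximum distance from $u_0$. Iteratively, for $i\ge 2$, examine the components of $G'\setminus V(P_1\cup\cdots\cup P_{i-1})$; if some component $C'$ has ``attachment profile'' (counting parts of $\mathcal{P}$ together with the already-chosen $P_j$ adjacent to~$C'$) of size exceeding $t-3$, then add a new shortest path $P_i$ in the residual graph from a vertex of $P_1\cup\cdots\cup P_{i-1}$ adjacent to $C'$ to a vertex of $C'$ at maximum distance. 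Stop when no such complex component remains, which happens after at most $t-3-s$ paths have been added. Let $H$ be the subgraph formed by the union of these paths.

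Three properties then need verification. \emph{Connectedness} of $H$ is immediate, since each new $P_i$ meets $P_1\cup\cdots\cup P_{i-1}$ at its starting vertex. \emph{The $f$-spread bound} follows because each $P_i$ is a shortest path in a subgraph of $G'$, so $|V(P_i)\cap N^k[v]|\le 2k+1$ for every vertex~$v$ (vertices on a geodesic realise $2k+1$ consecutive distances to $v$); summing over at most $t-3$ paths gives $|V(H)\cap N^k[v]|\le (t-3)(2k+1)=f(k)$. \emph{The width bound} is the crux: every component $C$ of $G'\setminus V(H)$ has attachment profile of size at most $t-2$, since otherwise contracting each attached part (already connected) to a single vertex and contracting $C$ to a single vertex would produce a clique minor of size at least $t$ in $G$, contradicting $K_t$-minor freeness. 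The inductive hypothesis then applies to each such component with its updated boundary of size at most $t-3$, and prepending $H$ to the resulting decompositions completes the construction.

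The main obstacle is making the $K_t$-minor argument rigorous. Contracting each attached part to a vertex yields a vertex joined to the contracted $C$, but forming a clique minor requires pairwise adjacency of these contracted vertices. This is exactly what the shortest-path construction arranges: any two of the paths $P_i,P_j$ in $H$ meet at a common anchor and so their contractions are trivially adjacent, while the parts of $\mathcal{P}$ can be routed to each other through $H$ and through already-contracted structure, supplying the needed pairwise edges. Making this routing precise, and verifying that the ``complex-component'' count strictly decreases per added path so the inner loop terminates within $t-3-s$ iterations, is the technical heart of the proof and is exactly where the hypothesis $t\ge 4$ and the specific form $f(k)=(t-3)(2k+1)$ arise.
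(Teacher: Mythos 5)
The paper does not prove Lemma~\ref{flatslim} at all: it is imported verbatim from~\cite{vdHetal16}, so there is no in-paper argument to compare against. Your proposal is a reconstruction of the argument in that reference (peel off connected unions of at most $t-3$ geodesics, and use $K_t$-minor-freeness to bound the number of parts attached to any residual component), so the overall strategy is the right one. But as written it has concrete gaps, and one step is actually false.

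The false step is the $f$-spread verification. You take $P_i$ to be a shortest path in the \emph{residual} graph $G'\setminus V(P_1\cup\cdots\cup P_{i-1})$, and then invoke the bound $|V(P_i)\cap N^k[v]|\le 2k+1$. That bound (Lemma~\ref{short}) requires $P_i$ to be a geodesic of the graph in which the neighbourhood $N^k[v]$ is computed, and the definition of $f$-flatness computes $N^k[v]$ in $G'=G-\bigcup_{j<i}V(H_j)$, not in the smaller graph where $P_i$ is optimal. A path that is a geodesic only of a subgraph can revisit the same $G'$-ball arbitrarily often, so the $2k+1$ count per path fails. In~\cite{vdHetal16} this is avoided by choosing all of the at most $t-3$ paths of a single part to be shortest paths of the \emph{same} graph $G'$ (e.g.\ emanating from a common root), which is what you should do. Beyond this, the two places you yourself flag as ``the technical heart'' are genuinely missing rather than routine: (i) the width bound needs an invariant, carried through the whole recursion, that the parts attached to any component pairwise touch (so that contracting them plus the component yields a clique minor); your claim that any two paths $P_i,P_j$ ``meet at a common anchor'' is not true of the construction you describe ($P_i$ only meets the union of the earlier paths), and the pairwise adjacency of the members of $\mathcal{P}$ to each other and to $H$ is exactly what must be maintained inductively and is nowhere established. (ii) The termination of your inner loop in at most $t-3-s$ iterations is asserted but not proved; adding one path per ``complex component'' can create new complex components, and nothing in your write-up shows the process halts within the required number of paths. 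In the cited proof the number of geodesics in a part is tied directly to the (bounded) number of attached parts of the component being processed, not to such a repair loop. Until (i) and (ii) are supplied and the geodesic issue is fixed, the proposal is a plan rather than a proof.
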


\noindent
Combining Lemmas~\ref{flatbound} and~\ref{flatslim} immediately gives
Theorem~\ref{minor}.

We say a path is \emph{optimal} if it is a shortest path between its
end-points. The following easy result states that a decomposition
$\mathcal{H}=(H_1,\dots,H_\ell)$ in which each subgraph $H_i$ is an optimal
path in $G-\bigcup_{1\le j< i}V(H_j)$ is $f$-flat for $f(k)=2k+1$. We call
such a decomposition an \emph{optimal-path decomposition}.

\begin{lem}[Van den Heuvel et al.~\cite{vdHetal16}]\label{short}\mbox{}\\*
  Let $G$ be a graph, $y$ be a vertex of~$G$, and $P$ be an optimal path in
  $G$. Then $P$ contains at most $2k+1$ vertices of the closed
  $k$-neighbourhood $N^k[y]$ of $y$.
\end{lem}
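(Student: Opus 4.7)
The plan is to use the defining property of an optimal (shortest) path together with the triangle inequality. Write $P = v_0, v_1, \ldots, v_m$ and let $v_{i_1}, v_{i_2}, \ldots, v_{i_r}$ (with $i_1 < i_2 < \cdots < i_r$) be the vertices of $P$ that lie in $N^k[y]$. The goal is to show $r \le 2k+1$.

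The key observation is that because $P$ is optimal, the distance in $G$ between any two of its vertices equals the distance between them along $P$. In particular, $d_G(v_{i_1}, v_{i_r}) = i_r - i_1$; otherwise, splicing a shorter $v_{i_1}, v_{i_r}$-path into $P$ in place of the subpath from $v_{i_1}$ to $v_{i_r}$ would yield a shorter walk (hence a shorter path) between the endpoints of $P$, contradicting optimality. On the other hand, since both $v_{i_1}$ and $v_{i_r}$ lie in $N^k[y]$, the triangle inequality gives
\[
d_G(v_{i_1}, v_{i_r}) \le d_G(v_{i_1}, y) + d_G(y, v_{i_r}) \le 2k.
\]
Combining these two facts yields $i_r - i_1 \le 2k$.

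Finally, since all of the indices $i_1, i_2, \ldots, i_r$ lie in the interval $[i_1, i_r]$, there are at most $i_r - i_1 + 1 \le 2k+1$ of them, proving $|V(P) \cap N^k[y]| \le 2k+1$. There is no real obstacle here; the only subtlety is remembering that ``optimal path'' means shortest between its own endpoints, so the subpath between any two of its vertices is itself a shortest path in $G$, which is precisely what lets the triangle-inequality bound transfer from distances in $G$ to distances along $P$.
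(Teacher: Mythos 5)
Your proof is correct: the subpath of an optimal path between any two of its vertices is itself a shortest path, so the two extreme vertices of $P$ lying in $N^k[y]$ are at distance at most $2k$ along $P$, trapping all such vertices in a window of at most $2k+1$ positions. The paper states this lemma without proof (citing~\cite{vdHetal16}), and your argument is exactly the standard one intended there.
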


\noindent
Optimal-path decompositions of small width were found in~\cite{vdHetal16}
for planar graphs.

\begin{lem}[Van den Heuvel et al.~\cite{vdHetal16}]\label{maxp}\mbox{}\\*
  Every maximal planar graph has an optimal-path decomposition of width at
  most 2.
\end{lem}

\noindent
This lemma allows us, through Lemma~\ref{flatbound}, to prove
Theorem~\ref{dcolplan}.

\begin{proof}[Proof of Theorem~\ref{dcolplan}]
  We begin by proving part (a). Since $\dcol_k(G)$ cannot decrease when
  edges are added, we may assume that $G$ is maximal planar. By
  Lemma~\ref{maxp}, there exists an optimal-path decomposition
  $\mathcal{H}=(H_1,\dots,H_\ell)$ of~$G$ of width at most~2. The proof of
  Lemma~\ref{flatbound} tells us that since $G$ admits a connected
  decomposition of width at most $2$, there is an ordering $L'$ of $V(G)$
  such that at most $\binom{\lfloor k/2\rfloor+3}{2}$ subgraphs among
  $H_1,\ldots,H_{\ell}$ contain vertices from $D_{L',k}(v)$, for every
  integer \mbox{$k\ge1$} and $v\in V(G)$. This ordering is obtained from an
  ordering $L$ of the subgraphs $H_1,\dots,H_\ell$, where vertices in the
  same subgraph are ordered in an arbitrary way. This time we have that
  each subgraph~$H_i$ is an optimal path. We order each~$H_i$ in its path
  order. Take $y\in V(G)$. Then $y\in V(H_i)$ for some $1\le i\le\ell$.
  Lemma~\ref{short} tells us that an optimal-path decomposition is
  $(2k+1)$-flat. Therefore,~$D_{L',k}(y)$ contains at most $2k+1$ vertices
  of each of the at most $\binom{\lfloor k/2\rfloor+3}{2}-1$ subgraphs,
  other than $H_i$, which intersect $D_{L',k}(y)$. Meanwhile, $D_{L',k}(y)$
  contains at most $k$ vertices of~$H_i$, since we have ordered the optimal
  path~$H_i$ in its path order. We find that every vertex~$y$ in $G$
  satisfies
  \[|D_{L',k}(y)|\le (\binom{\lfloor k/2\rfloor+3}{2}-1)\cdot(2k+1)+k=
  \binom{\lfloor k/2\rfloor+3}{2}\cdot(2k+1)-k-1,\]
  which concludes the proof of part (a).

  The proof of part (b) is similar to the proof of
  \cite[Theorem~1.5\,(a)]{vdHetal16}. We assume $g>0$, as otherwise the
  result holds by Theorem~\ref{dcolplan}\,(a). It is well known (see e.g.\
  \cite[page~111]{MT01} and the proof of \cite[Theorem~1]{Q85}) that a
  graph of genus $g>0$ contains a non-separating cycle $C$ that consists of
  two optimal paths and such that $G-C$ has genus $g-1$. We construct a
  linear order $L$ of~$V(G)$ in the following way. The first vertices in
  $L$ will be the vertices in such a cycle $C$. If after removing that
  cycle the genus of the resulting graph is greater than~0, then we choose
  another such cycle, make its vertices the next ones in the ordering, and
  remove the cycle. We repeat this process inductively until the resulting
  graph is a planar graph $G'$. The vertices of~$G'$ are placed at the end
  of~$L$, ordered in the way that gives the bound on $\dcol_k(G')$ from
  Theorem~\ref{dcolplan}\,(a).

  Lemma~\ref{short} tells us that for any vertex $y$ and optimal path $P$
  we have $|N^k[y]\cap V(P)|\le 2k+1$ for every $k$. Hence
  $|D_{L,k}(y)\cap V(P)|\le 2k+1$ for every vertex $y$ and optimal
  path~$P$. It follows that for any vertex $y$ in $G$, the set $D_{L,k}(y)$
  can have at most $2g(2k+1)$ vertices on the removed cycles. (Each of the
  two optimal paths that form a cycle is optimal after the earlier cycles
  are removed, and vertices cannot belong to $D_{L,k}(y)$ through vertices
  in older cycles.) Only a vertex $x$ in the planar graph $G'$ can have
  other vertices of $G'$ in $D_{L,k}(x)$ and Theorem~\ref{dcolplan}\,(a)
  gives us a bound on the number of such vertices. Hence, we obtain that
  every vertex~$y$ in $G$ satisfies
  \[|D_{L,k}(y)|\le 2g\cdot(2k+1)+
  \binom{\lfloor k/2\rfloor+3}{2}\cdot(2k+1)-k-1.\]
  The result follows.
\end{proof}

\section{A lower bound on the chromatic number of exact distance-3 graphs
  of planar graphs}\label{sec4}

\begin{figure}[h]
 \centering
 \captionsetup{justification=centering}
 \includegraphics[height=2 in]{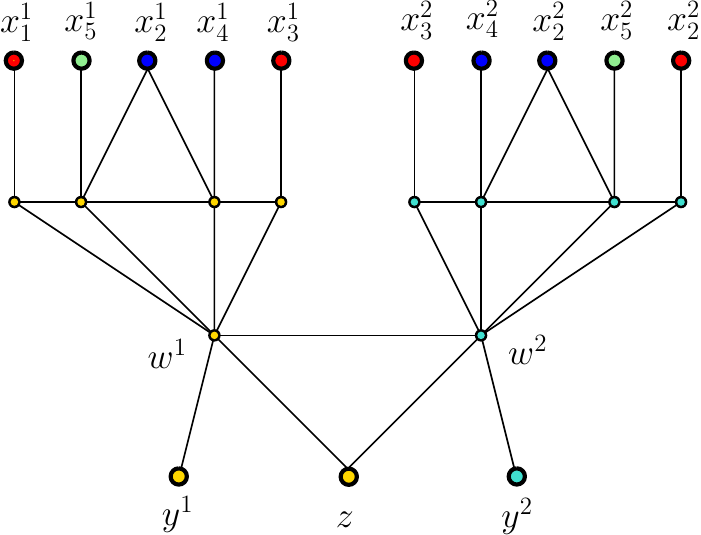}
 \smallskip
 \caption{ An outerplanar graph $G_4$ with $\chi(\edp{G_4}{3})=5$.}
  \label{five}
\end{figure}

In \cite[Exercise~11.4]{NOdM12} a planar graph~$G$ such that
$\chi(\edp{G}{3})=6$ is given (see also~\cite{NOdM15}). As we will prove
below, the outerplanar graph~$G_4$ in Figure~\ref{five} satisfies
$\chi(\edp{G_4}{3})=5$. We will use that graph to construct a planar graph
$G_5$ such that $\chi(\edp{G_5}{3})=7$.

\begin{mtheorem}[Theorem \ref{thm5}\,(b)]\mbox{}\\*
  There is a planar graph~$G_5$ such that $\chi(\edp{G_5}{3})=7$.
\end{mtheorem}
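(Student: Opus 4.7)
The plan is to first verify that the outerplanar graph $G_4$ shown in Figure~\ref{five} satisfies $\chi(\edp{G_4}{3})=5$. The upper bound follows by exhibiting an explicit $5$-colouring, which can be checked by hand on such a small graph. For the lower bound, I would identify inside $\edp{G_4}{3}$ a small obstruction to $4$-colouring: most conveniently a $K_5$ on five mutually distance-$3$ vertices of $G_4$, or failing that an odd wheel or another $5$-chromatic subgraph, and verify no $4$-colouring exists by examining the distance-$3$ pairs among a critical set of vertices.

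Having $G_4$ in hand, I would construct $G_5$ by gluing several copies of $G_4$ along a shared sub-structure in a planar manner. The outerplanarity of $G_4$ is exactly what makes this possible: since all vertices of $G_4$ lie on its outer face, several copies can be attached at a common vertex, or along a common boundary arc, without creating crossings. The gluing is engineered so that a small set of vertices drawn from different copies becomes pairwise at distance exactly~$3$ in $G_5$, producing inside $\edp{G_5}{3}$ a clique (or near-clique) large enough to force two extra colours beyond the $5$ needed within a single copy. Care must be taken that the attachment does not inadvertently create a shorter path between vertices we want to remain at distance exactly~$3$, since that would delete edges of $\edp{G_5}{3}$ and weaken the construction.

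To prove $\chi(\edp{G_5}{3})\ge 7$, I plan to argue by contradiction: suppose a proper $6$-colouring of $\edp{G_5}{3}$ exists. The restriction to each copy of $G_4$ uses at least~$5$ of the $6$ available colours, so at most one colour can be absent from any given copy. The distance-$3$ pairs created by the gluing then impose a rigid alignment between the $5$-colour palettes of different copies, and a case analysis based on which vertex of $G_5$ receives the sixth colour (or on which copy leaves out which colour) leads to a contradiction by forcing two formerly distinct vertices to share a colour across distance $3$. For the upper bound $\chi(\edp{G_5}{3})\le 7$, I would exhibit an explicit $7$-colouring, obtained by first $5$-colouring each copy of $G_4$ as in Step~1 and then correcting the overlaps in the glue region with at most two fresh colours.

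The main obstacle will be the combinatorial case analysis used to rule out a $6$-colouring: the exact distance-$3$ graph of $G_5$ has many edges between vertices of different copies of $G_4$, and one must track all forced colour constraints and check none of the finitely many $6$-colour assignments works. A secondary obstacle is the simultaneous requirement of planarity and the preservation of distances; these two together constrain which gluings are admissible, so producing a single construction that makes both the lower and upper bounds transparent is the delicate part of the argument.
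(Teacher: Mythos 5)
Your high-level strategy matches the paper's (establish $\chi(\edp{G_4}{3})=5$, then glue two distance-preserving copies of $G_4$ to gain two colours), but at both key steps the mechanism you propose is not the one that works, and the proposal leaves exactly those steps unexecuted. For the lower bound on $G_4$: there is no reason to expect a $K_5$ in $\edp{G_4}{3}$, and the paper does not find one. Its argument is structural rather than clique-based: $\edp{G_4}{3}$ contains two disjoint $5$-cycles $C^1,C^2$ (so at least $3$ colours on their union), a vertex $z$ adjacent to all of $V(C^1)\cup V(C^2)$, and vertices $y^1,y^2$ with $V(C^i)\subseteq N(y^i)$ and $y^1y^2$ an edge; one then splits on whether $V(C^1)\cup V(C^2)$ receives exactly $3$ or at least $4$ colours. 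Your fallback clause (``an odd wheel or another $5$-chromatic subgraph'') gestures at this but does not supply it, and without the specific configuration the lower bound is simply not proved.

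For the construction of $G_5$, your plan is to make vertices \emph{from different copies} pairwise at distance exactly $3$ so as to build a large clique or near-clique across copies. The paper's construction does the opposite: the two copies $F_1,F_2$ of $G_4$ are joined to new vertices $w_1',w_2'$ by paths of length $2$, so that vertices of $F_1$ and $F_2$ end up at distance at least $5$ from one another and contribute \emph{no} inter-copy edges to $\edp{G_5}{3}$; the extra two colours come instead from three new apex-type vertices ($z'$ at distance exactly $3$ from all of $V(F_1)\cup V(F_2)$, and $y_1',y_2'$ each dominating one copy with $y_1'y_2'$ an edge of $\edp{G_5}{3}$), which reproduces the $G_4$ lower-bound argument one level up with no case analysis over $6$-colourings. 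The subdivision of the joining edges is what guarantees both planarity and that intra-copy distance-$3$ pairs are preserved (any path leaving $F_1$ must pass through $w_1'$ and has length at least $4$). Since you neither specify a gluing nor show that your intended inter-copy distance-$3$ clique can be realised in a planar graph (the known dense examples such as $\edp{L_{1,k}}{3}$ are complete multipartite with small clique number, which is evidence against this route), the step from $5$ to $7$ remains a genuine gap rather than a proof.
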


\begin{proof}
  We will prove first that $\chi(\edp{G_4}{3})=5$, using the vertex
  labelling provided in Figure~\ref{five}. Consider a proper colouring of
  $\edp{G_4}{3}$. Note that $C^1=x_1^1,x_2^1,x_3^1,x_4^1,x_5^1,x_1^1$ and
  $C^2=x_1^2,x_2^2,x_3^2,x_4^2,x_5^2,x_1^2$ form disjoint 5-cycles
  $\edp{G_4}{3}$. Hence, the vertices in $V(C^1)\cup V(C^2)$ need at
  least~3 colours. Given that $V(C^1)\cup V(C^2) \subseteq N(z)$ in
  $\edp{G_4}{3}$, if we use more than~3 colours on $V(C^1)\cup V(C^2)$,
  then we already use at least 5 colours. So assume that the vertices in
  $V(C^1)\cup V(C^2)$ are coloured with~3 colours only. Since
  $V(C^i)\subseteq N(y^i)$ in $\edp{G_4}{3}$ for $i=1,2$, and
  $y^1y^2\in E(\edp{G_4}{3})$, we need at least~2 extra colours. So we
  always use at least~5 colours in a proper colouring of~$\edp{G_4}{3}$.
  Figure~\ref{five} gives a colouring of~$G_4$ with~5 colours which is a
  proper colouring of $\edp{G_4}{3}$. This shows that
  $\chi(\edp{G_4}{3})=5$.

  Now let $F_1$ and $F_2$ be two disjoint copies of $G_4$. Let~$H$ be a
  path on 5 vertices, disjoint from~$F_1$ and~$F_2$, with vertices
  $y_1',w_1',z',w_2',y_2'$ in that order, together with the edge
  $w_1'w_2'$. (This is exactly the graph formed by the vertices
  $\{y^1,w^1,z,w^2,y^2\}$ in Figure~\ref{five}.) The graph~$G^-_5$ has
  vertex set and edge set:
  \begin{align*}
    V(G^-_5)&= V(F_1)\cup V(F_2)\cup V(H);\\
    E(G^-_5)&= E(F_1)\cup E(F_2)\cup E(H)\cup
    \{\,b_1w_1'\mid b_1\in V(F_1)\,\}\cup
    \{\,b_2w_2'\mid b_2\in V(F_2)\,\}.
  \end{align*}
  Finally, the graph~$G_5$ is obtained from~$G^-_5$ by subdividing once all
  the edges of the form $b_1w_1'$ and~$b_2w_2'$ (replacing each edge by a
  path of length 2). Since $G_4$ is outerplanar, it is easy to check
  that~$G_5$ is planar.

  If $u,v\in V(F_{1})$ and $P$ is a $uv$-path in $G_5$ but
  $V(P)\nsubseteq V(F_1)$, then $w_1'\in V(P)$. Thus the length of $P$ is
  at least 4. We conclude that if two vertices $u,v$ have distance 3 in
  $G_5$, then any shortest $uv$-path has all its vertices in $V(F_1)$.
  Therefore, the number of colours needed to colour the vertices of~$F_1$
  in $\edp{G_5}{3}$ is~5, and the same applies to~$F_2$. We now can argue
  as in the proof of $\chi(\edp{G_4}{3})=5$ above to reach the conclusion
  $\chi(\edp{G_5}{3})=7$.
\end{proof}

\noindent
Since the graph $G_4$ in Figure~\ref{five} is outerplanar, it does not
have~$K_4$ as a minor. Also, the graph~$G_5$ we constructed above is
planar, so does not have~$K_5$ as a minor. We can iterate the construction
to obtain graphs~$G_t$ that are $K_t$-minor free, for $t\ge4$, and for
which $\chi(\edp{G_t}{3})\ge2(t-2)+1$. To obtain $G_{t+1}$ from~$G_t$, we
take two copies of~$G_t$, one copy of the graph~$H$ from above, and add
paths of length 2 between all vertices in the first copy of~$G_t$
and~$w_1'$, and between all vertices in the second copy of~$G_t$
and~$w_2'$. It is straightforward to check that if~$G_t$ is $K_t$-minor
free, then $G_{t+1}$ is $K_{t+1}$-minor free, and that $\edp{G_{t+1}}{3}$
needs at least 2 more colours than $\edp{G_t}{3}$ does.

The property that for $t\ge5$ there exists a graph~$G$ that is $K_t$-minor
free and satisfies $\chi(\edp{G}{3})\ge2(t-2)+1$ does not extend to $t=3$.
To see this, note that the only graphs that are $K_3$-minor free are
acyclic graphs (i.e.\ forests), which implies they are bipartite. And for
bipartite graphs~$G$ we have that $\edp{G}{3}$ is bipartite as well (in
fact, even the exact $p$-power graph $\epp{G}{p}$ is bipartite for every
odd~$p$), hence $\chi(\edp{G}{3})\le2$.

Notice that one can construct the graph $G_4$ of Figure~\ref{five} (and the
graphs $G_t$ for $t\ge4$) by using operations similar to those of used in
the Haj\'os construction~\cite{H61}. Consider the graph~$S$ induced on
$G_4$ by $(N(w^1)\setminus w^2)\cup\{w_1,x_1^1,x_2^2,\ldots,x_5^1\}$. The
main connected component of the graph $\edp{S}{3}$ consists of a cycle and
two apex vertices, $z$ and $y_1$, that are adjacent to all the vertices in
the cycle. One can obtain $G_4$ by taking two copies of $S$, identifying
the two vertices that correspond to $z$, and adding an edge between the two
vertices that correspond to $w_1$. In the exact distance-3 graph, we see
that one of the apex vertices has been identified, while those that
correspond to $y_1$ have been joined by an edge. However, the operation of
deletion, used in the Haj\'{o}s construction, is not used in our
construction. This is mainly because we want to obtain a graph with
chromatic number strictly larger than that of the parts it is formed of.

\section{Discussion and open problems}

In this paper we give bounds on the chromatic number of exact distance
graphs for some classes of graphs. In general, the difference between the
best lower and upper bounds is still quite large, so we can't really claim
we have an insight of what the correct best possible bounds are.

When considering odd distances, one, trivial, example for which there are
tight bounds is the class of bipartite graphs. We noted at the end of
Section~\ref{sec4} that every bipartite graph $G$ satisfies
$\chi(\edp{G}{p})\le \chi(\epp{G}{p})\le 2$ for every odd~$p$.

Since our upper bounds are expressed in terms of generalised colouring
numbers they increase with the distance. In contrast, we do not provide
lower bounds which increase with the distance. Because of the difficulty in
providing lower bounds which depend on the distance, the following
question, attributed to Van den Heuvel and Naserasr, was asked in
\cite[Section~11.9.3]{NOdM12} (see also~\cite{NOdM15}): ``Is there a
constant~$C$ such that for every odd integer~$p$ and every planar graph~$G$
we have $\chi(G^{[\natural p]})\le C$\,?'' Very recently, Bousquet et
al.~\cite{Betal17} gave a negative answer to this question by constructing
a sequence of outerplanar graphs $U_3,U_5,\dots$ such that for every odd
$p\ge3$ we have
$\chi(U_p^{[\natural p]})\in \Omega\bigl(\frac{p}{\log p}\bigr)$. In
Section~\ref{sec3} we proved that if~$G$ has tree-width at most $t$ then
$\chi(G^{[\natural p]})\in \mathcal{O}(p^{t-1})$. This means that graphs
$G$ of tree-width at most $2$ satisfy
$\chi(G^{[\natural p]})\in \mathcal{O}(p)$. Therefore, for graphs of
tree-width at most $2$ (which includes outerplanar graphs), our upper
bounds are close to having the right order in terms of the distance.

\medskip
As we mentioned in Section~\ref{sec1}, the proof of Theorem~\ref{thm11}
actually gives that for a class of graphs $\mathcal{K}$ with bounded
expansion we can find a constant $N=N(\mathcal{K},p)$ such that
$\chi(\edp{G}{1}\cup\edp{G}{3}\cup\cdots\cup\edp{G}{p})\le N$. There are
constructions that show that this constant must grow with~$p$, even if
$\mathcal{K}$ is the class of outerplanar graphs. One such construction
appears in~\cite{NOdM15}. A very simple one, which we sketch in
Figure~\ref{largek}, can be found in~\cite{Q17}.

\begin{figure}[h]
 \centering
 \captionsetup{justification=centering}
 \bigskip
 \includegraphics[height=0.23 in]{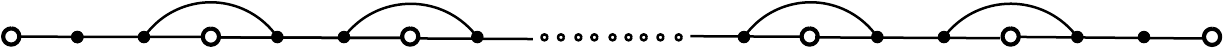}
 \medskip
 \caption{Outerplanar graphs $G$ for which $\omega(G^{odd})$, and hence
   $\chi(G^{odd})$,\newline can be arbitrarily large.}
  \label{largek}
\end{figure}

For a graph $G$, a natural generalisation of
$\edp{G}{1}\cup \edp{G}{3}\cup \dots \cup\edp{G}{p}$ is the
graph~$G^{odd}$, which has the same vertex set as~$G$, and~$xy$ is an edge
in~$G^{odd}$ if and only if $x$ and $y$ have odd distance. Both
constructions in the previous paragraph tell us that for outerplanar
graphs~$G$ the chromatic number of~$G^{odd}$ can be arbitrarily large
because the clique number $\omega(G^{odd})$ can be arbitrarily large. This
motivates the following open problem of Thomass\'{e}, which appears
in~\cite[Section~11.9.3]{NOdM12} (see also \cite{NOdM15}).

\begin{prob}[{\cite[Problem 11.2]{NOdM12}}]\mbox{}\\*
  Is there a function~$f$ such that for every planar graph~$G$ we have
  $\chi(G^{odd})\le f(\omega(G^{odd}))$\,?
\end{prob}

\noindent
Another area that is ripe for further research is the chromatic number of
exact distance graphs with even distance, for specific classes of graphs.
Theorem~\ref{thm10}\,(b) gives a first result for even distances. There is
very little we know about the dependencies between $\chi(\edp{G}{p})$ and
$\wcol_p(G)$ for even~$p$.

It is well-known, and easy to prove (see, e.g., \cite{KMR98}), that for
every graph~$G$ we have $\chi(G^2)\le(2\col(G)-3)\cdot\Delta(G)$, hence
certainly $\chi(\edp{G}{2})\le(2\col(G)-3)\cdot\Delta(G)$. This suggests
that there might exist a function~$\varphi$ such that
$\chi(\edp{G}{p})\le\varphi(\wcol_{p-1}(G))\cdot\Delta(G)$, or even
$\chi(\edp{G}{p})\le\varphi(\wcol_{p/2}(G))\cdot\Delta(G)$. We have not
been able to prove such a result. Neither do we know what the best value of
$r(p)$ should be such that a result of the form
$\chi(\edp{G}{p})\le\varphi(\wcol_{r(p)}(G))\cdot\Delta(G)$ is possible
for even~$p$.

\subsection*{Acknowledgement}

The authors would like to thank the anonymous referees for careful reading
and for their corrections and suggestions.

\end{document}